\documentclass{amsart}
\usepackage{amssymb,amscd}
\DeclareMathSymbol{\twoheadrightarrow} {\mathrel}{AMSa}{"10}

    

\def\Q{{\mathbb Q}}
\def\Z{{\mathbb Z}}

\def\F{{\mathbb F}}

                    \def\G{{\mathfrak G}}

            \def\mult{\mathrm{mult}}
\def\Gal{\mathrm{Gal}}
                                            \def\Pic{\mathrm{Pic}}

\def\tr{\mathrm{tr}}

\def\End{\mathrm{End}}
\def\Aut{\mathrm{Aut}}
\def\Hom{\mathrm{Hom}}

\def\cl{\mathrm{Cliff}}

\def\fchar{\mathrm{char}}

                                            \def\H{\mathrm{H}}
\def\dim{\mathrm{dim}}

                                                             \def\Br{\mathrm{Br}}
                                                                            \def\NS{\mathrm{NS}}
                                                                              \def\PH{\mathrm{PH}}

                                                                                 \def\non{\mathrm{non}}

      \def\sep{\mathrm{sep}}

\def\et{{\mathrm{\acute et}}}
\def\G{{\mathbb G}}
\def\lra{{\longrightarrow}}

\newtheorem{thm}{Theorem}[section]

\newtheorem{cor}[thm]{Corollary}
\newtheorem{prop}[thm]{Proposition}

\theoremstyle{definition}

           \newtheorem{rem}[thm]{Remark}

\hyphenation{semi-stable}
\title[A finiteness theorem for the Brauer group]
{A finiteness theorem for the Brauer group of K3 surfaces in odd characteristic}
\author{Alexei N. Skorobogatov}
\address{Department of Mathematics, South Kensington Campus, Imperial College, London, SW7 2BZ England, United Kingdom}
\address{Institute for the Information Transmission Problems, Russian Academy of Sciences, 19 Bolshoi Karetnyi, Moscow, 127994 Russia}
\email{a.skorobogatov@imperial.ac.uk}
\author[Yuri G. Zarhin]{Yuri G. Zarhin}
\thanks{This work was partially supported by a grant from the Simons Foundation (\#246625 to Yuri Zarkhin).}

\address{Department of Mathematics, Pennsylvania State University,
University Park, PA 16802, USA}
\address{Department of Mathematics, The Weizmann Institute of Science,
 POB 26,  Rehovot 7610001, Israel}

\address{Institute of Mathematical Problems of Biology, Russian Academy of
Sciences, Pushchino, Moscow Region,
Russia}

 \email{zarhin\char`\@math.psu.edu}
\begin{document}
\begin{abstract}
Let $k$ be a field finitely generated over the finite field $\F_p$ 
of odd characteristic $p$.
For any K3 surface $X$ over $k$ we prove that the 
prime to $p$ component of the cokernel of the natural map 
$\Br(k)\to\Br(X)$ is finite. 
\end{abstract}

\maketitle
\section{Introduction}
Let $k$ be a field with an algebraic closure $\bar{k}$, and let $\bar{k}_{\sep}
\subset \bar{k}$ be the separable closure of $k$ in $\bar k$. Let 
$\Gal(k)=\Gal(\bar{k}_{\sep}/k)$ be the absolute Galois
group of $k$, and let $\Br(k)$ be the Brauer group of $k$.

For an irreducible, smooth, projective algebraic variety $X$ over $k$ 
we write $\bar{X}$ for the variety
$X\times_k \bar{k}_{\sep}$ over $\bar{k}_{\sep}$. 
We write $\Br(X)=\H^2_\et(X,\G_m)$ for the Brauer--Grothendieck group of $X$, and $\Br_0(X)$ for the image of the canonical homomorphism $\Br(k)\to \Br(X)$ induced by the structure map.

For an abelian group $B$ and a positive integer $n$ we write
$B/n=B\otimes\Z/n$.

Let $\ell$ be a prime. For an abelian periodic group $B$ let
$B(\non-\ell)$ be the subgroup of $B$ consisting of the 
elements of order prime to $\ell$.  
We write $T_{\ell}(B)$ for the $\ell$-adic {\sl Tate module} of $B$; 
it has the natural structure of a $\Z_{\ell}$-module and is torsion-free. If $B$ has only finitely many elements of order $\ell$, then $T_{\ell}(B)$ is a free $\Z_{\ell}$-module of finite rank.
See \cite[p. 485]{SZ} for more details.

Let $X$ be a K3 surface over $k$. It is known that the N\'eron--Severi group
$\NS(\bar{X})=\Pic(\bar{X})$ is a free abelian group of rank $22$.
If $\ell\ne \fchar(k)$, then $\H^2_\et(\bar{X},\Z_{\ell}(1))$ is a free $\Z_{\ell}$-module of rank $22$, and the natural map
$$\H_{\et}^2(\bar{X},\Z_{\ell}(1))/\ell\ \tilde\lra \ \H_{\et}^2(\bar{X},\mu_{\ell})$$
is an isomorphism of $\Gal(k)$-modules. 
By Grothendieck \cite[III.8.2]{Gro} and Tate \cite{TateDix} there is
a short exact sequence of 
$\Gal(k)$-modules, which are also {\sl free} $\Z_{\ell}$-modules of finite rank:
$$0\lra\NS(\bar{X})\otimes \Z_{\ell} \lra \H_{\et}^2(\bar{X},\Z_{\ell}(1)) \lra
 T_{\ell}(\Br(\bar{X})) \lra 0. \eqno{(1)}$$
We identify $\NS(\bar{X})\otimes \Z_{\ell}$ with its image in $\H_{\et}^2(\bar{X},\Z_{\ell}(1))$.
Then (1) shows that $\NS(\bar{X})\otimes \Z_{\ell}$ is a {\sl saturated} $\Z_{\ell}$-submodule of $\H_{\et}^2(\bar{X},\Z_{\ell}(1))$, that is,
the quotient is torsion-free.
It is well known that there exists a finite Galois field extension $k'/k$ such that the open finite index subgroup $\Gal(k')\subset \Gal(k)$ acts trivially on $\NS(\bar{X})$ and hence on $\NS(\bar{X})\otimes \Z_{\ell}$.

We {\sl define} $\NS(X)$ as the Galois invariant subgroup
$\NS(X):=\NS(\bar X)^{\Gal(k)}$. (Note that the image of $\Pic(X)$
in $\NS(X)$ is a subgroup of finite index.)
In particular, $\NS(X)$ is a saturated $\Z$-submodule of $\NS(\bar{X})$.
We have
$$\NS(X)\otimes \Z_{\ell} \subset \H_{\et}^2(\bar{X},\Z_{\ell}(1))^{\Gal(k)}\subset \H_{\et}^2(\bar{X},\Z_{\ell}(1)).$$
Since $\NS(X)\otimes \Z_{\ell}$ is a saturated $\Z_{\ell}$-submodule of 
$\NS(\bar X)\otimes \Z_{\ell}$, it is also a saturated $\Z_{\ell}$-submodule of 
$\H_{\et}^2(\bar{X},\Z_{\ell}(1))$. The intersection pairing
$$\NS(\bar{X}) \times \NS(\bar{X}) \to \Z, \quad\quad \alpha,\beta \mapsto \alpha\cdot \beta$$
is non-degenerate, see \cite[Prop. 3, p. 64]{Matsusaka}
and also \cite[Lemma V.3.27]{Milne}.
In particular, the discriminant $d_X$ of this pairing is a non-zero integer. 
On the other hand, there is the Poincar\'e duality pairing
$$e_{X,\ell}: \H_{\et}^2(\bar{X},\Z_{\ell}(1))\times \H_{\et}^2(\bar{X},\Z_{\ell}(1)) \to \Z_{\ell},$$
which is perfect and Galois-invariant. The restriction of $e_{X,\ell}$ to $\NS(\bar{X})$ coincides with the intersection pairing. 
We define the group of transcendental cycles $T(\bar X)_{\ell}$ as the orthogonal complement to $\NS(\bar{X})\otimes \Z_{\ell}$ in $\H_{\et}^2(\bar{X},\Z_{\ell}(1))$ with respect to  $e_{X,\ell}$. It is clear that $T(\bar X)_{\ell}$ is a saturated Galois-stable 
$\Z_{\ell}$-submodule of $\H_{\et}^2(\bar{X},\Z_{\ell}(1))$. 
It is also clear that if $\ell$ does not divide $d_X$, then
$$\H_{\et}^2(\bar{X},\Z_{\ell}(1))=(\NS(\bar{X})\otimes \Z_{\ell})\oplus T(\bar X)_{\ell}.$$
For any $\ell$ we have
$$(\NS(\bar{X})\otimes \Z_{\ell})\ \cap \ T(\bar X)_{\ell}=0,$$
and the direct sum $(\NS(\bar{X})\otimes \Z_{\ell})\oplus T(\bar X)_{\ell}$ is a subgroup of finite index in $\H_{\et}^2(\bar{X},\Z_{\ell}(1))$. 
It is clear that $T(\bar X)_{\ell}/\ell$ is a $\Gal(k)$-submodule of
$\H_{\et}^2(\bar{X},\mu_{\ell})$.

Let us consider the $\Q_{\ell}$-vector space
$\H_{\et}^2(\bar{X},\Q_{\ell}(1)):=\H_{\et}^2(\bar{X},\Z_{\ell}(1))\otimes_{\Z_{\ell}}\Q_{\ell}$
with its natural structure of a $\Gal(k)$-module. We have a natural embedding of Galois modules
$\NS(\bar{X})\otimes\Q_{\ell} \subset \H_{\et}^2(\bar{X},\Q_{\ell}(1))$,
from which we obtain
$$\NS(X)\otimes\Q_{\ell}\subset \H_{\et}^2(\bar{X},\Q_{\ell}(1))^{\Gal(k)}.$$

K. Madapusi Pera \cite[Thm. 1]{Pera}
proved that if $k$ is finitely generated over $\F_p$ and $p>2$, then
$$\NS(X)\otimes \Q_{\ell} = \H_{\et}^2(\bar{X},\Q_{\ell}(1))^{\Gal(k)}.$$
This is an important special case of the celebrated Tate conjecture 
on algebraic cycles \cite{Tate0}.
Closely related results were previously obtained by
D. Maulik \cite{Maulik} and F. Charles \cite[Cor. 2]{Charles}.
Since $\NS(X)\otimes \Z_{\ell}$ is a saturated $\Z_{\ell}$-submodule  in $\H_{\et}^2(\bar{X},\Z_{\ell}(1))$, the theorem of 
Madapusi Pera is equivalent to 
$$\NS(X)\otimes \Z_{\ell} = \H_{\et}^2(\bar{X},\Z_{\ell}(1))^{\Gal(k)}.$$
Another  restatement of the same result is
$$T(\bar X)_{\ell}^{\Gal(k)}=0.$$
Note that this still holds if $k$ is replaced by any
finite separable field extension. 

Our results are based on the Tate conjecture for K3 surfaces and 
the existence of the Kuga--Satake
variety in odd characteristic, as established by Madapusi Pera in
\cite[Thm. 4.17]{Pera}, as well as on the results of one of the authors
\cite{ZarhinMZ1}.

\begin{thm}
\label{semisimpleK3}
Let $k$ be a field finitely generated over $\F_p$, where $p\not=2$,
and let $X$ be a K3 surface over $k$.
For any prime $\ell\not=p$ the Galois module
$\H_{\et}^2(\bar{X},\Q_{\ell}(1))$ is  semisimple.
\end{thm}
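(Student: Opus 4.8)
The plan is to reduce the semisimplicity of $\H_{\et}^2(\bar X,\Q_\ell(1))$ to a statement about the Kuga--Satake abelian variety and then invoke semisimplicity of Galois representations attached to abelian varieties over finitely generated fields. First I would split off the algebraic part: since $\NS(X)\otimes\Q_\ell$ is a saturated Galois-submodule and, by the theorem of Madapusi Pera quoted above, coincides with $\H_{\et}^2(\bar X,\Q_\ell(1))^{\Gal(k)}$, and since the Galois action on $\NS(\bar X)\otimes\Q_\ell$ factors through a finite quotient (hence is semisimple), it suffices to prove that the orthogonal complement $T(\bar X)_\ell\otimes\Q_\ell$ is a semisimple $\Gal(k)$-module. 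Here I would also pass to a finite separable extension $k'/k$ over which the full N\'eron--Severi lattice is defined; semisimplicity of a $\Gal(k)$-module can be checked after restriction to an open subgroup (a $\Gal(k)$-submodule of a semisimple $\Gal(k')$-module is semisimple, since any $\Gal(k)$-stable subspace, averaged over coset representatives, admits a complement), so this reduction is harmless.

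Next I would bring in the Kuga--Satake construction in odd characteristic from \cite[Thm. 4.17]{Pera}: after a further finite extension and a choice of polarization there is an abelian variety $A$ over (a finite extension of) $k$, of CM-type controlled by a Clifford algebra, together with a $\Gal$-equivariant isomorphism realizing $T(\bar X)_\ell\otimes\Q_\ell$ as a subquotient — concretely, a direct summand — of $\End_{\cl}(\H^1_{\et}(\bar A,\Q_\ell))$, or more precisely as a Galois-stable subspace of a tensor construction built from $\H^1_{\et}(\bar A,\Q_\ell)$ and its dual, twisted appropriately. The point of invoking \cite{ZarhinMZ1} is that the $\ell$-adic Galois representation $\H^1_{\et}(\bar A,\Q_\ell)$ of an abelian variety over a finitely generated field of odd characteristic is semisimple. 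Semisimplicity is inherited by the dual and by arbitrary tensor powers in characteristic zero (here the coefficients are $\Q_\ell$, so Clebsch--Gordan-type arguments, or simply the fact that for a group acting semisimply the tensor product of two semisimple representations is semisimple when one of them is a direct sum of finite-dimensional pieces, apply), hence by direct summands. Therefore $T(\bar X)_\ell\otimes\Q_\ell$, being a Galois direct summand of such a tensor construction, is semisimple; combined with the first paragraph this gives semisimplicity of $\H_{\et}^2(\bar X,\Q_\ell(1))$.

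The main obstacle is the middle step: making the Kuga--Satake comparison precise in positive characteristic and in $\ell$-adic (rather than $p$-adic or Betti) cohomology, keeping track of the field of definition and the twist, so that $T(\bar X)_\ell\otimes\Q_\ell$ genuinely sits as a \emph{Galois-equivariant} direct summand of an $\End$-algebra of $\H^1_{\et}$ of an abelian variety and not merely abstractly as a subquotient of some motive. This is exactly what Madapusi Pera's results provide, but one must check that the isomorphism of \cite[Thm. 4.17]{Pera} is compatible with the Galois actions on both sides over a finite extension of $k$, and that descending back to $k$ (as in the first paragraph) is legitimate. A secondary technical point is verifying that semisimplicity of tensor constructions over $\Q_\ell$ for a profinite group follows formally from semisimplicity of the building block $\H^1_{\et}(\bar A,\Q_\ell)$; this is standard but should be stated carefully, e.g. by reducing to the Zariski closure of the image of Galois in $\GL$ and using that a linear algebraic group over a field of characteristic zero acts semisimply on a faithful representation if and only if it is reductive, a property preserved under the relevant operations.
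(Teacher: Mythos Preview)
Your proposal is correct and follows the paper's own route: embed the primitive $\ell$-adic cohomology into $\End_{\Q_\ell}(\H^1_{\et}(\bar A,\Q_\ell))$ for the Kuga--Satake variety $A$ over a finite extension $k'$ (Madapusi Pera, Thm.~4.17, via $E_\ell\hookrightarrow C(E_\ell,q_\ell)$ acting by left multiplication), use Zarhin's semisimplicity of $V_\ell(A)$ together with Chevalley's theorem (in place of your reductivity/tensor-power argument) to make $\End$ semisimple, and descend from $\Gal(k')$ to $\Gal(k)$ by Serre's lemma. The only cosmetic difference is that the paper splits off just the polarization class $\xi$ and works with the primitive cohomology $\PH^2_{\et}(\bar X,\Z_\ell(1))$ rather than the transcendental lattice $T(\bar X)_\ell$, since $\PH^2$ is what the Clifford/Kuga--Satake embedding directly sees; your $T(\bar X)_\ell$ sits inside $\PH^2$ anyway, so nothing is lost.
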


This is proved in Theorem \ref{finiteK3}~(i).
Our main results are the two following statements whose proofs
can be found in the final section of this note.

\begin{thm}
\label{tatefiniteK3}
Let $k$ be a field finitely generated over $\F_p$, where $p\not=2$, 
and let $X$ be a K3 surface over $k$. Then
for all but finitely many $\ell$ the Galois module 
$\H_{\et}^2(\bar{X},\mu_{\ell})$ is semisimple and
$\H_{\et}^2(\bar{X},\mu_{\ell})^{\Gal(k)}=\NS(X)/\ell$.
\end{thm}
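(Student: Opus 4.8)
The plan is to deduce the mod-$\ell$ statement, for all but finitely many $\ell$, from the integral and rational statements already available. Recall that we have the $\Z_\ell$-lattice $\H^2_\et(\bar X,\Z_\ell(1))$ together with its saturated Galois-stable sublattice $\NS(\bar X)\otimes\Z_\ell$ and the orthogonal complement $T(\bar X)_\ell$, and that for $\ell\nmid d_X$ we get an orthogonal direct sum decomposition $\H^2_\et(\bar X,\Z_\ell(1))=(\NS(\bar X)\otimes\Z_\ell)\oplus T(\bar X)_\ell$, which reduces mod $\ell$ to $\H^2_\et(\bar X,\mu_\ell)=(\NS(\bar X)/\ell)\oplus T(\bar X)_\ell/\ell$ as $\Gal(k)$-modules. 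So for all but finitely many $\ell$ the computation of invariants and the question of semisimplicity split into the two summands. On the $\NS(\bar X)/\ell$ summand there is a further finite set of bad $\ell$ to discard: choose a finite Galois extension $k'/k$ with $\Gal(k')$ acting trivially on $\NS(\bar X)$; then for $\ell\nmid[k':k]$ the $\Gal(k)$-module $\NS(\bar X)/\ell$ is semisimple (the Galois action factors through the finite group $\Gal(k'/k)$ of order prime to $\ell$, so Maschke applies) and $(\NS(\bar X)/\ell)^{\Gal(k)}=(\NS(\bar X))^{\Gal(k)}/\ell=\NS(X)/\ell$, again because $\NS(X)=\NS(\bar X)^{\Gal(k)}$ is saturated so the reduction map on invariants is surjective with the right kernel. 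Thus the whole burden falls on the transcendental summand: I must show that for all but finitely many $\ell$ the $\Gal(k)$-module $T(\bar X)_\ell/\ell$ is semisimple and has no nonzero invariants.

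For the invariants, the input is Madapusi Pera's theorem, quoted above as $T(\bar X)_\ell^{\Gal(k)}=0$ (equivalently $\NS(X)\otimes\Q_\ell=\H^2_\et(\bar X,\Q_\ell(1))^{\Gal(k)}$), valid for every $\ell\ne p$ and stable under finite separable base change. The mod-$\ell$ vanishing of invariants does not follow formally from the $\Z_\ell$-statement for a single $\ell$ — there can be invariants in $T(\bar X)_\ell/\ell$ coming from "denominators" — but it does follow for almost all $\ell$ once one has a uniform bound. Concretely, I would pick a finitely generated subring $A\subset k$ over which $X$ spreads out to a smooth projective family and over which the relevant étale cohomology sheaves are lisse, so that by specialization one may assume $k$ is finitely generated over $\F_p$ and then, enlarging $\F_p$, that $k$ is a \emph{global function field} (a function field of a curve over a finite field); the statement for general finitely generated $k$ reduces to this case by a standard Lefschetz-type fibration and base-change argument, using that Madapusi Pera's theorem and semisimplicity are insensitive to finite separable extensions and to passing to a suitable subfield over which everything is defined. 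Over a global function field the Galois representation on $T(\bar X)_\ell$ underlies, via the Kuga--Satake construction of \cite[Thm.~4.17]{Pera}, the cohomology of an abelian variety (in a suitable Clifford-algebra sense), so the images of $\Gal(k)$ form a compatible system, and the key finiteness for abelian varieties — this is where the results of \cite{ZarhinMZ1} and of \cite{SZ} enter — give: (a) $T_\ell(\Br(\bar X))^{\Gal(k)}=0$ for all $\ell\ne p$, and (b) for all but finitely many $\ell$, $(T(\bar X)_\ell/\ell)^{\Gal(k)}=0$ and the Galois image on $T(\bar X)_\ell$ is "as large as possible" modulo $\ell$, i.e. its mod-$\ell$ reduction is generated by the image of the $\ell$-adic Lie algebra.

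Semisimplicity of $T(\bar X)_\ell/\ell$ for almost all $\ell$ is then extracted from two facts. First, by Theorem~\ref{semisimpleK3} the $\Q_\ell$-representation $\H^2_\et(\bar X,\Q_\ell(1))$, hence $T(\bar X)_\ell\otimes\Q_\ell$, is semisimple, so by Faltings' finiteness / the theory of the algebraic monodromy group the connected component $G^\circ$ of the Zariski closure $G_\ell$ of the Galois image in $\GL(T(\bar X)_\ell\otimes\Q_\ell)$ is reductive; and these groups are part of a compatible system, so for almost all $\ell$ the mod-$\ell$ image contains the $\F_\ell$-points of (the derived group of) a reductive model of $G^\circ$ — this is the standard "big image mod $\ell$" statement for compatible systems attached to abelian varieties. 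Second, a semisimple-module-over-a-group argument: if a subgroup of $\GL_N(\F_\ell)$ contains $H(\F_\ell)$ for a semisimple simply connected $\F_\ell$-group $H$ with $\ell$ larger than an explicit bound depending only on $N$ (so that the relevant Weyl-module reductions stay irreducible, cf. Jantzen's bounds), then any $\F_\ell[H(\F_\ell)]$-module of dimension $\le N$ is semisimple; applying this with $N=22$ gives semisimplicity of $T(\bar X)_\ell/\ell$, and combined with the semisimplicity of $\NS(\bar X)/\ell$ from the first paragraph, of the whole of $\H^2_\et(\bar X,\mu_\ell)$. I expect the main obstacle to be precisely the passage from "the $\Q_\ell$-representations are semisimple and invariant-free" to the uniform mod-$\ell$ statement: this requires controlling, independently of $\ell$, the algebraic monodromy group and the index of the Galois image in its reductive model, which is exactly the content one imports from \cite{ZarhinMZ1} and the Kuga--Satake description of $T(\bar X)_\ell$ in \cite{Pera} — reducing to the abelian-variety case is the crux, and once there the mod-$\ell$ semisimplicity and vanishing of invariants for almost all $\ell$ are known.
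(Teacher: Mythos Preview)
Your overall shape is right and matches the paper: split $\H^2_\et(\bar X,\mu_\ell)$ as $(\NS(\bar X)/\ell)\oplus T(\bar X)_\ell/\ell$ for $\ell\nmid d_X$, handle the $\NS$-part by Maschke and saturation, and push the real content into the transcendental summand via the Kuga--Satake abelian variety from \cite[Thm.~4.17]{Pera}. The paper does exactly this (its Theorem~\ref{finiteK3} parts (ii) and (iii) carry the burden). Your spreading-out and reduction to a global function field is unnecessary noise: the hypothesis already has $k$ finitely generated over $\F_p$, and every input you need (Zarhin's theorems, Madapusi Pera's Kuga--Satake) is stated at that level of generality.

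The genuine gap is in your mod-$\ell$ semisimplicity argument. You want to invoke a ``big image mod $\ell$'' statement---that for almost all $\ell$ the mod-$\ell$ Galois image contains $H(\F_\ell)$ for a simply connected semisimple model $H$ of the algebraic monodromy group---and then appeal to Jantzen-type bounds. That big-image statement is \emph{not} a standard citable result for abelian varieties over finitely generated fields of positive characteristic; Serre's results of this kind are over number fields, and in any case this is far harder than what is needed. The paper avoids all of this. Once $\PH^2_\et(\bar X,\Z_\ell(1))$ is embedded as a saturated $\Gal(k')$-submodule of $\End_{\Z_\ell}(\H^1_\et(\bar A,\Z_\ell))$ via the Clifford algebra, one uses two elementary inputs: (a) the $\Gal(k')$-module $A_\ell$ is semisimple for almost all $\ell$ (this is \cite{ZarhinMZ2,ZarhinInv,ZarhinCEJM14}, quoted here as Theorem~\ref{finite}), and (b) Serre's theorem \cite[Cor.~1]{SerreSS} that for $\ell>2\dim_{\F_\ell}(V)-2$ the semisimplicity of a $G$-module $V$ over $\F_\ell$ implies that of $\End_{\F_\ell}(V)$. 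Hence $\End_{\F_\ell}(A_\ell)$ is semisimple for almost all $\ell$, and so is every submodule, in particular $\PH^2_\et(\bar X,\Z_\ell(1))/\ell$ and $T(\bar X)_\ell/\ell$. Going up from $\Gal(k')$ to $\Gal(k)$ costs only the primes dividing $[k':k]$, again by \cite[Lemme~5]{SerreSS}. This is the missing idea in your proposal.

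For the vanishing of $(T(\bar X)_\ell/\ell)^{\Gal(k)}$ you gesture at the right sources but do not say what actually happens. The paper's mechanism (Proposition~\ref{tech}) is concrete: the trace form $(u,v)\mapsto\tr(uv)$ on $\End_{\Z_\ell}(T_\ell(A))$ is perfect and Galois-invariant, and any saturated Galois submodule $U_\ell$ with $U_\ell^{\Gal(k)}=0$ is orthogonal to $\End(A)\otimes\Z_\ell$; for $\ell\nmid d_A$ this forces $U_\ell$ into the orthogonal complement $S_\ell$, and then Theorem~\ref{finite} in the form $\End_{\Gal(k)}(A_\ell)=\End(A)/\ell$ gives $(S_\ell/\ell)^{\Gal(k)}=0$, hence $(U_\ell/\ell)^{\Gal(k)}=0$. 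Applying this with $U_\ell=T(\bar X)_\ell$ (inside $\End_{\Z_\ell}(\H^1_\et(\bar A,\Z_\ell))$ via Kuga--Satake) gives what you want with no monodromy-group machinery.
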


Using the method of \cite{SZ}
we deduce from the above results the following finiteness statement.

\begin{thm}
\label{BrauerF}
Let $k$ be a field finitely generated over $\F_p$, where $p\not=2$, and 
let $X$ be a K3 surface over $k$. Then
the group $[\Br(X)/\Br_0(X)](\non-p)$ is finite.
\end{thm}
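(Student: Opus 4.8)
The plan is to exploit the exact sequence (1) together with Theorems \ref{semisimpleK3} and \ref{tatefiniteK3}, following the strategy of \cite{SZ}. First I would recall the standard comparison of $\Br(X)/\Br_0(X)$ with Galois cohomology. For each prime $\ell\ne p$ one has, from the Kummer sequence and (1), an identification up to bounded error of the $\ell$-primary part of $\Br(\bar X)^{\Gal(k)}$ and the cokernel contributions, so that the key finite-generation question reduces to controlling, uniformly in $\ell$, the groups $\H^1(\Gal(k), \NS(\bar X)\otimes\Q_\ell/\Z_\ell)$ and the Galois-coinvariants/invariants of $T_\ell(\Br(\bar X))$ and of $\H^2_\et(\bar X,\mu_\ell)$. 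The point of \cite{SZ} is that the prime-to-$p$ part of $\Br(X)/\Br_0(X)$ fits into an exact sequence whose outer terms are, on the one hand, $\bigoplus_\ell \H^1(\Gal(k),\NS(\bar X)\otimes\Q_\ell/\Z_\ell)$ (which is finite because $\Gal(k')$ acts trivially on $\NS(\bar X)$ for a finite extension $k'/k$, and $\Gal(k)$ is finitely generated as a profinite group when $k$ is finitely generated over $\F_p$), and on the other hand a group built from $\H^2_\et(\bar X,\mu_\ell)^{\Gal(k)}$ and $T(\bar X)_\ell$.

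Concretely I would argue as follows. For all but finitely many $\ell$ we have, by Theorem \ref{tatefiniteK3}, that $\H^2_\et(\bar X,\mu_\ell)$ is semisimple with $\H^2_\et(\bar X,\mu_\ell)^{\Gal(k)}=\NS(X)/\ell$; combined with the decomposition $\H^2_\et(\bar X,\mu_\ell)\supset (\NS(\bar X)\otimes\Z_\ell)/\ell\oplus T(\bar X)_\ell/\ell$ (an equality for $\ell\nmid d_X$), this forces $(T(\bar X)_\ell/\ell)^{\Gal(k)}=0$ for almost all $\ell$, and hence $T_\ell(\Br(\bar X))^{\Gal(k)}=0$ and $\Br(\bar X)^{\Gal(k)}[\ell]$ is bounded. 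Semisimplicity of $\H^2_\et(\bar X,\Q_\ell(1))$ from Theorem \ref{semisimpleK3} then gives, for every $\ell\ne p$, that $\H^2_\et(\bar X,\Q_\ell(1))^{\Gal(k)}=\NS(X)\otimes\Q_\ell$ is a Galois direct summand, so the coinvariants and invariants coincide rationally and the transcendental part contributes nothing to $\Br(X)/\Br_0(X)$ up to torsion of bounded exponent. Feeding these two inputs into the exact sequence of \cite{SZ} shows that $[\Br(X)/\Br_0(X)](\non-p)$ is a quotient of a finite group, hence finite.

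The main obstacle is the uniformity in $\ell$: one must show that the "error terms" — the failure of $(\NS(\bar X)\otimes\Z_\ell)\oplus T(\bar X)_\ell$ to fill all of $\H^2_\et(\bar X,\Z_\ell(1))$ at the primes $\ell\mid d_X$, the $\ell$-torsion of $\Br(\bar X)^{\Gal(k)}$, and $\H^1$ of the finite group $\Gal(k'/k)$ with values in $\NS(\bar X)\otimes\Z/\ell$ — are all bounded and vanish for $\ell$ outside a finite set. The finitely many bad primes $\ell$ (those dividing $d_X$, equal to $p$, or where semisimplicity of the mod-$\ell$ module could fail) each contribute only a finite group since $\Br(\bar X)[\ell^\infty]^{\Gal(k)}$ is cofinitely generated and $\Gal(k)$ is topologically finitely generated; the content is precisely Theorem \ref{tatefiniteK3}, which handles all large $\ell$ simultaneously. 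Once that is in hand, assembling the pieces is the same bookkeeping as in \cite[\S\S1--2]{SZ}, and I would simply cite that method rather than reproduce it.
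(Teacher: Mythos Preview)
Your overall strategy matches the paper's: reduce via Hochschild--Serre to the finiteness of $\Br(\bar X)^{\Gal(k)}(\non\text{-}p)$, use Theorem~\ref{tatefiniteK3} to kill $\Br(\bar X)_\ell^{\Gal(k)}$ for almost all $\ell$, and use semisimplicity plus the Tate conjecture at each remaining $\ell\ne p$ to show $\Br(\bar X)\{\ell\}^{\Gal(k)}$ is finite. That is exactly what the paper does, citing \cite[Prop.~2.5(c)]{SZ} for the last step.

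There is, however, one genuine error you should fix. You twice invoke that ``$\Gal(k)$ is topologically finitely generated when $k$ is finitely generated over $\F_p$.'' This is not known (and is likely false, e.g.\ for $k=\F_p(t)$), and in any case it is not what makes either argument work. For the finiteness of $\H^1(k,\NS(\bar X))$ (or of $\bigoplus_\ell \H^1(k,\NS(\bar X)\otimes\Q_\ell/\Z_\ell)$), the correct reason is that the $\Gal(k)$-action on $\NS(\bar X)$ factors through a \emph{finite} quotient $\Gal(k'/k)$, and $\H^1(k',\NS(\bar X))=\Hom_{\mathrm{cont}}(\Gal(k'),\Z^r)=0$; inflation--restriction then gives $\H^1(k,\NS(\bar X))\cong\H^1(\Gal(k'/k),\NS(\bar X))$, which is finite. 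For the finitely many bad primes, the assertion ``$\Br(\bar X)\{\ell\}^{\Gal(k)}$ is cofinitely generated and $\Gal(k)$ is topologically finitely generated, hence finite'' is a non sequitur: a cofinitely generated $\ell$-group with trivial Galois action is already $(\Q_\ell/\Z_\ell)^n$, which is infinite. What actually forces finiteness is the combination of the Tate conjecture $\H^2_\et(\bar X,\Q_\ell(1))^{\Gal(k)}=\NS(X)\otimes\Q_\ell$ (Madapusi Pera) and the semisimplicity of $\H^2_\et(\bar X,\Q_\ell(1))$ (Theorem~\ref{semisimpleK3}); together these give $T_\ell(\Br(\bar X))^{\Gal(k)}=0$, which is equivalent to $\Br(\bar X)\{\ell\}^{\Gal(k)}$ being finite (this is \cite[Prop.~2.5]{SZ}). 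You do gesture at this in your middle paragraph, so the fix is simply to delete the topological-finite-generation claims and make the appeal to semisimplicity $+$ Tate explicit at each bad prime.
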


For a K3 surface $X$ over a finitely generated field of 
characteristic zero, the finiteness of $\Br(X)/\Br_0(X)$ 
was proved in \cite{SZ}.

\section{Abelian varieties}
Let $A$ be an abelian variety over a field $k$. We write $\End(A)$
for the ring of endomorphisms of $A$.
For a positive integer $n$ not divisible by $\fchar(k)$, 
we write $A_n$ for the kernel of multiplication by $n$ in $A(\bar{k})$. 
Then $A_n$ is a $\Gal(k)$-submodule of $A(\bar{k}_{\sep})$. 

Let $\ell\not=\fchar(k)$ be a prime. Recall that
the $\ell$-adic Tate module $T_{\ell}(A)$ is the
projective limit of abelian groups $A_{\ell^i}$, where the transition maps
$A_{\ell^{i+1}}\to A_{\ell^i}$ are multiplications by $\ell$. It is
well known \cite{Serre} that $T_{\ell}(A)$ is a free
$\Z_{\ell}$-module of rank $2\dim(A)$ equipped with a natural
continuous action
$$\rho_{\ell,A}:\Gal(k) \to \Aut_{\Z_{\ell}}(T_{\ell}(A)).$$
We write $V_{\ell}(A)=T_{\ell}(A)\otimes_{\Z_{\ell}}\Q_{\ell}$, and
view $T_{\ell}(A)$ as a $\Z_{\ell}$-lattice in
$V_{\ell}(A)$. Then $\rho_{\ell,A}$ gives rise to the $\ell$-adic representation
$$\rho_{\ell,A}: \Gal(k) \to \Aut_{\Z_{\ell}}(T_{\ell}(A))\subset
\Aut_{\Q_{\ell}}(V_{\ell}(A)).$$ 
There are natural canonical isomorphisms of Galois modules
$T_{\ell}(A)/\ell^i \cong A_{\ell^i}$ for all $i\geq 1$. 
If we forget about the Galois action, then we can naturally 
identify the $\Z_{\ell}$-modules $T_{\ell}(A)$
and $T_{\ell}(\bar{A})$, 
and the $\Q_{\ell}$-vector spaces $V_{\ell}(A)$ and $V_{\ell}(\bar{A})$.

There is a natural embedding of $\Z$-algebras
$\End(A) \hookrightarrow \End(\bar{A})$,
whose image is a saturated subgroup \cite[Sect. 4, p. 501]{SerreTate}. 
We will identify $\End(A)$ with its image in $\End(\bar{A})$.
The natural action of $\Gal(k)$ on $\End(\bar{A})$ is continuous when
$\End(\bar{A})$ is given discrete topology, so 
the image of $\Gal(k)$ in $\Aut(\End(\bar{A}))$ is finite.
The ring $\End(A)$ coincides with the Galois-invariant
subring $\End(\bar{A})^{\Gal(k)}$, see \cite{Silverberg}. 
In other words, there is a finite Galois field extension 
$k'/k$ such that $\Gal(k) \to \Aut(\End(\bar{A}))$
factors through $\Gal(k) \twoheadrightarrow \Gal(k'/k)$
and
$$\End(A)=\End(\bar{A})^{\Gal(k'/k)}.$$

There are natural embeddings of  $\Z_{\ell}$-algebras
$$\End(A)\otimes \Z_{\ell}\subset \End(\bar{A})\otimes \Z_{\ell}
\subset \End_{\Z_{\ell}}(T_{\ell}(A)),$$
of $\Q_{\ell}$-algebras
$$\End({A})\otimes \Q_{\ell} \subset  \End(\bar{A})\otimes \Q_{\ell}\subset
\End_{\Q_{\ell}}(V_{\ell}(A))$$
and of $\F_{\ell}$-algebras
$$\End({A})\otimes \F_\ell \subset  \End(\bar{A})\otimes \F_\ell \subset
\End_{\F_{\ell}}(A_{\ell}).$$
The image of $\End(A)\otimes\Z_{\ell}$ in $\End_{\Z_{\ell}}(T_{\ell}(A))$
is obviously contained in the centraliser 
$\End_{\Gal(k)}(T_{\ell}(A))$ of $\Gal(k)$ in
$\End_{\Z_{\ell}}(T_{\ell}(A))$. Similar statements
hold if $\Z_\ell$ is replaced by $\Q_\ell$ or $\F_\ell$.  

For each prime $\ell \ne \fchar(k)$ let us consider 
the $\Q_{\ell}$-bilinear  {\sl trace form}
$$\psi_{\ell}: \End_{\Q_{\ell}}(V_{\ell}(A)) \times \End_{\Q_{\ell}}(V_{\ell}(A)) \to \Q_{\ell}, \ u,v \mapsto \tr(uv).$$
This form is Galois-invariant, symmetric and
non-degenerate. The same formula defines
a perfect (unimodular) $\Z_{\ell}$-bilinear 
form on $\End_{\Z_{\ell}}(T_{\ell}(A))$. 
The restriction of this form to $\End(\bar{A})$ 
takes values in $\Z$ and does not depend on the choice of $\ell$, see
\cite[Sect. 19, Thm. 4]{Mumford}. 

Let us choose a polarisation on $A$ and write 
$u \mapsto u'$ for
the corresponding Rosati involution on $\End(\bar{A})\otimes \Q$, see 
\cite[Sect. 20]{Mumford}. Then
$\tr(u'u)$ is a {\sl positive} rational number if $u\not=0$,
see \cite[Ch. V, Sect. 3, Thm. 1]{Lang} or \cite[Sect. 21, Thm. 1]{Mumford}.
This implies that the $\Gal(k)$-invariant form
$$\psi_{\ell}: \End(\bar{A})\times \End(\bar{A})\to \Z$$
is non-degenerate (but not  necessarily perfect) and does not depend on $\ell$.
Since the Rosati involution commutes with the action of $\Gal(k)$,
and $\End(A)=\End(\bar{A})^{\Gal(k)}$, the bilinear form
$$\psi_{\ell}: \End(A)\times \End(A)\to \Z$$
is also non-degenerate, and therefore its discriminant $d_A$ is a {\sl non-zero} integer that does not depend on $\ell$. 

The following assertion was proved by one of the authors 
\cite{ZarhinIz,ZarhinMZ1} in $\fchar(k)>2$, by Faltings 
\cite{Faltings1,Faltings2} in $\fchar(k)=0$ and by Mori \cite{MB} 
in $\fchar(k)=2$. (See also \cite{ZarhinP,ZarhinCEJM14}.)

\begin{thm}
\label{tateSS}
Let $k$ be a field finitely generated over its prime subfield and
let $A$ be an abelian variety over $k$. For a prime $\ell\not=\fchar(k)$
we have

\smallskip

{\rm (i)} 
$\End(A)\otimes \Z_{\ell}= \End_{\Gal(k)}(T_{\ell}(A))$ and
$\End(A)\otimes \Q_{\ell}= \End_{\Gal(k)}(V_{\ell}(A))$;

\smallskip

{\rm (ii)} the Galois module $V_{\ell}(A)$ is semisimple.
\end{thm}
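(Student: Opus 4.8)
The plan is to deduce Theorem~\ref{tateSS} from Tate's theorem for abelian varieties over finite fields, the decisive extra ingredient being the following finiteness statement: for a fixed prime $\ell\ne\fchar(k)$, there are only finitely many isomorphism classes of abelian varieties over $k$ that admit a $k$-isogeny of $\ell$-power degree to $A$. I will first explain how this finiteness yields (i) and (ii), and then how it is proved; the finiteness is by far the hardest part, the remainder being a formal argument of Tate.

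Granting the finiteness, one runs Tate's linear-algebra argument. Let $W\subseteq V_{\ell}(A)$ be a $\Gal(k)$-stable $\Q_{\ell}$-subspace, set $M=W\cap T_{\ell}(A)$, and for each $n\ge 1$ form the $\Gal(k)$-stable lattice $T_n=M+\ell^{n}T_{\ell}(A)$, which satisfies $\ell^{n}T_{\ell}(A)\subseteq T_n\subseteq T_{\ell}(A)$. Each $T_n$ is the Tate module of an abelian variety $A_n$ over $k$ equipped with a $k$-isogeny $A_n\to A$ of $\ell$-power degree. By the finiteness statement, infinitely many of the $A_n$ are mutually $k$-isomorphic; composing the corresponding isogenies and passing to an $\ell$-adic limit along a subsequence produces an element of $\End(A)\otimes\Z_{\ell}$ whose image on $V_{\ell}(A)$ is exactly $W$. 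Since $\End(A)\otimes\Q$ is a semisimple $\Q$-algebra (as $A$ is isogenous to a product of simple abelian varieties), producing, for every such $W$, an endomorphism with prescribed image forces---by Tate's argument---both that $V_{\ell}(A)$ is a semisimple $\Gal(k)$-module and that $\End_{\Gal(k)}(V_{\ell}(A))=\End(A)\otimes\Q_{\ell}$; tracking the lattices $T_{\ell}(A_n)$ rather than merely the $\Q_{\ell}$-spaces throughout refines the last equality to the integral assertion $\End_{\Gal(k)}(T_{\ell}(A))=\End(A)\otimes\Z_{\ell}$. This step uses nothing about $k$ beyond the finiteness it invokes, so it applies uniformly whenever $k$ is finitely generated over its prime field.

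It remains to establish the finiteness. By the Zarhin trick, $(A\times\hat{A})^{4}$ carries a principal polarisation, and its endomorphism ring and Tate modules recover those of $A$, so it suffices to bound the number of principally polarised abelian varieties of a given dimension lying in a fixed isogeny class over $k$. When $k$ is finite this is immediate: the relevant moduli space is of finite type, the polarisation degree is fixed, and $k$ has finitely many points. For $k$ finitely generated over $\F_p$ or over $\Q$ one spreads $A$ out to an abelian scheme over a model of finite type over $\Z$ and specialises at closed points, using that specialisation is injective on prime-to-residue-characteristic torsion and on $\Hom$-groups, which reduces the count to the finite residue fields. This is the theorem of Faltings when $\fchar(k)=0$, the theorem of one of the authors when $\fchar(k)>2$, and requires the additional arguments of Mori when $\fchar(k)=2$. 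The main obstacle is precisely this finiteness---once it is available, the semisimplicity and the commutant equalities follow formally.
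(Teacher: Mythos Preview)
The paper does not prove this theorem at all: it is stated as a known result, with citations to Zarhin in characteristic $>2$, to Faltings in characteristic $0$, and to Mori (via Moret-Bailly) in characteristic $2$. So there is no argument in the paper to compare against; the authors treat Theorem~\ref{tateSS} as a black box.

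Your first two paragraphs correctly reproduce Tate's formal mechanism: the passage from the finiteness of $\ell$-power-isogeny classes to semisimplicity and to the commutant identity is exactly Tate's argument, and your account of it is accurate. Citing the same three sources the paper cites would already match what the paper does.

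The problem is the third paragraph, where you sketch a proof of the finiteness input. The specialisation argument you give does not work. Injectivity of specialisation on torsion and on $\Hom$-groups tells you nothing about injectivity on \emph{isomorphism classes of abelian varieties}: distinct abelian varieties over $k$ can and do reduce to isomorphic varieties over a residue field, so ``reducing the count to the finite residue fields'' is not a valid step. Neither of the cited proofs proceeds this way. Faltings' argument in characteristic $0$ goes through the theory of the Faltings height and Arakelov-type estimates; it is not a specialisation-to-finite-fields argument. Zarhin's argument in characteristic $p>2$ analyses the image of Galois in $\Aut_{\Z_\ell}(T_\ell(A))$ directly (using, among other things, the existence of sufficiently many Frobenii after spreading out, and properties of algebraic envelopes), rather than bounding isomorphism classes by passing to a single closed fibre. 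If you want to go beyond the paper and actually sketch these proofs, you must replace the specialisation paragraph with the genuine ingredients; otherwise, simply citing the references, as the paper does, is the honest option.
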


\begin{rem}
When $k$ is finite, the assertion (i) of Theorem \ref{tateSS} was proved by Tate \cite{Tate}, who conjectured that it holds for an arbitrary finitely generated field. He also proved that the two formulae in (i) are equivalent for any given $k,A,\ell$. The semisimplicity of $V_{\ell}(A)$ for finite $k$ was established earlier by Weil \cite{Mumford}.
\end{rem}

The following assertion was proved in \cite{ZarhinMZ2,ZarhinInv,ZarhinCEJM14}.

\begin{thm}
\label{finite}
Let $k$ be a field finitely generated over its prime subfield,
and let $A$ be an abelian variety over $k$. 
Then for all but finitely many primes $\ell\ne \fchar(k)$ 
the Galois module $A_{\ell}$ is semisimple and
$\End(A)/\ell= \End_{\Gal(k)}(A_{\ell})$.
\end{thm}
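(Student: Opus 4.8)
The plan is to combine the integral Tate conjecture (Theorem~\ref{tateSS}) with a study of how the centraliser $\End_{\Gal(k)}(T_{\ell}(A))$ behaves under reduction modulo $\ell$, using the trace form $\psi_{\ell}$ and its nonzero discriminant $d_A$ to isolate the finitely many bad primes. Once those are removed, both assertions reduce to the single statement that an auxiliary $\Gal(k)$-module with no integral invariants also has no invariants modulo $\ell$; making that uniform in $\ell$ is the substantive point and relies on the compatible-system structure of $\{\rho_{\ell,A}\}$.

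Write $R=\End(A)$ and $G=\Gal(k)$. By Theorem~\ref{tateSS}, $R\otimes\Q_{\ell}=\End_{G}(V_{\ell}(A))$ for every prime $\ell\ne\fchar(k)$, and since $V_{\ell}(A)$ is a semisimple $\Q_{\ell}[G]$-module its endomorphism algebra is semisimple; hence $R\otimes\Q$ is a semisimple $\Q$-algebra and $R$ is an order in it. Fix a nonzero integer $D$ such that for every prime $\ell\nmid D$ we have $\ell\ne\fchar(k)$, the trace form $\psi_{\ell}$ restricts to a unimodular form on $R\otimes\Z_{\ell}=\End_{G}(T_{\ell}(A))$ (possible because $d_A\ne 0$ does not depend on $\ell$), and $R/\ell=R\otimes\F_{\ell}$ is a semisimple $\F_{\ell}$-algebra (possible because $R$ is an order in a semisimple $\Q$-algebra). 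Fix such an $\ell$. As the ambient trace form on $\End_{\Z_{\ell}}(T_{\ell}(A))$ is unimodular and restricts to a unimodular form on $R\otimes\Z_{\ell}$, there is an orthogonal decomposition of $\Z_{\ell}[G]$-modules, with $G$ acting by conjugation and acting trivially on the first summand:
$$\End_{\Z_{\ell}}(T_{\ell}(A))=\bigl(R\otimes\Z_{\ell}\bigr)\oplus N_{\ell},\qquad N_{\ell}:=\bigl(R\otimes\Z_{\ell}\bigr)^{\perp}.$$

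Reducing modulo $\ell$ gives a decomposition of $\F_{\ell}[G]$-modules $\End_{\F_{\ell}}(A_{\ell})=(R/\ell)\oplus(N_{\ell}/\ell)$, with trivial action on $R/\ell$; moreover $R/\ell\to\End_{\F_{\ell}}(A_{\ell})$ is injective with image inside $\End_{G}(A_{\ell})$, because $R\otimes\Z_{\ell}$ is a $\Z_{\ell}$-direct summand of $\End_{\Z_{\ell}}(T_{\ell}(A))$ and the elements of $\End(A)$ commute with $G$. Taking $G$-invariants, $\End_{G}(A_{\ell})=(R/\ell)\oplus(N_{\ell}/\ell)^{G}$; and $N_{\ell}^{G}=N_{\ell}\cap\End_{G}(T_{\ell}(A))=N_{\ell}\cap(R\otimes\Z_{\ell})=0$. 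Hence $\End(A)/\ell=\End_{G}(A_{\ell})$ is equivalent to $(N_{\ell}/\ell)^{G}=0$, and it remains to prove, for all but finitely many $\ell$, that \emph{(a)} $A_{\ell}$ is a semisimple $\F_{\ell}[G]$-module and \emph{(b)} $(N_{\ell}/\ell)^{G}=0$. Both assertions hold rationally: $(N_{\ell}\otimes\Q_{\ell})^{G}\subseteq(R\otimes\Q_{\ell})\cap(N_{\ell}\otimes\Q_{\ell})=0$, and $V_{\ell}(A)\otimes\Q_{\ell}$ is semisimple by Theorem~\ref{tateSS}(ii).

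Thus the whole difficulty is uniformity in $\ell$; this is the content of \cite{ZarhinMZ2,ZarhinInv,ZarhinCEJM14} (building on \cite{ZarhinIz,ZarhinMZ1}) and is where one uses that $k$ is finitely generated. Spreading $A$ out to an abelian scheme over a smooth model $S$ of finite type over the prime field, $\rho_{\ell,A}$ factors through $\pi_{1}$ of a fixed dense open $U\subseteq S$; the Frobenius classes $\rho_{\ell,A}(\Frob_{s})$ at closed points $s\in U$ are dense in the image by Chebotarev, their characteristic polynomials lie in $\Z[T]$ and are independent of $\ell$, and the Zariski closure of the image has reductive identity component by Theorem~\ref{tateSS}(ii). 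One fixes finitely many closed points $s_{1},\dots,s_{m}$ and shows that, for $\ell$ outside a further fixed finite set, the minimal polynomial of each $\rho_{\ell,A}(\Frob_{s_{i}})$ has squarefree reduction modulo $\ell$ and reduction modulo $\ell$ commutes with forming the centraliser of $\{\rho_{\ell,A}(\Frob_{s_{i}})\}$ inside $\End_{\Z_{\ell}}(T_{\ell}(A))$ --- the excluded primes being divisors of discriminants and determinants attached to these finitely many classes, which are controllable because, although the matrices are $\ell$-adic, their traces are $\ell$-independent. Granting this, the centraliser of $\{\rho_{\ell,A}(\Frob_{s_{i}})\bmod\ell\}$ in $N_{\ell}/\ell$ is the reduction of $N_{\ell}\cap(R\otimes\Z_{\ell})=0$ and contains $(N_{\ell}/\ell)^{G}$, which gives (b); and (a) follows along the same lines, using in addition the reductivity of the $\ell$-adic monodromy group and the structure of the mod-$\ell$ image for large $\ell$, which reduce semisimplicity of $A_{\ell}$ to the finite-field statement that each $\rho_{\ell,A}(\Frob_{s_{i}})$ is semisimple modulo $\ell$. \textbf{The main obstacle is precisely this uniformity}: Theorem~\ref{tateSS} is proved one prime at a time, and promoting its conclusions to a single statement valid for almost all $\ell$ is exactly what forces the finiteness theorems of \cite{ZarhinMZ2,ZarhinInv,ZarhinCEJM14} and the compatible-system input --- $\ell$-independence of Frobenius characteristic polynomials and Chebotarev density --- that is available only because $k$ is finitely generated.
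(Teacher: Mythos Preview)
The paper does not prove this theorem: it is stated with a bare citation to \cite{ZarhinMZ2,ZarhinInv,ZarhinCEJM14} and then \emph{used} as input to Proposition~\ref{tech}. So there is no ``paper's own proof'' to compare against.

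Your first two paragraphs are correct, but note that they reproduce almost verbatim the argument of Proposition~\ref{tech} in the paper. There the authors set up exactly the orthogonal decomposition $\End_{\Z_{\ell}}(T_{\ell}(A))=(\End(A)\otimes\Z_{\ell})\oplus S_{\ell}$ for $\ell\nmid d_A$, reduce modulo~$\ell$, and then \emph{invoke} Theorem~\ref{finite} to conclude that $(S_{\ell}/\ell)^{\Gal(k)}=0$ for almost all~$\ell$. Your $N_{\ell}$ is their $S_{\ell}$, and your statement~(b) is precisely the conclusion they extract from Theorem~\ref{finite}. In other words, you have correctly identified that (a) and (b) are the substance of the theorem, but you have not moved past the point where the paper itself stops and cites the literature.

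Your third paragraph, which is meant to supply that substance, is too loose to count as a proof. The specific gap is the clause ``one fixes finitely many closed points $s_{1},\dots,s_{m}$'' together with the later assertion that the centraliser of $\{\rho_{\ell,A}(\Frob_{s_i})\}$ in $N_{\ell}$ is $N_{\ell}\cap(R\otimes\Z_{\ell})=0$. For that intersection to vanish you need the centraliser of the chosen Frobenii in $\End_{\Z_{\ell}}(T_{\ell}(A))$ to equal $R\otimes\Z_{\ell}$, i.e.\ you need these finitely many elements to already cut out the full Tate centraliser. For a \emph{fixed} $\ell$ Chebotarev lets you do this, but you are asserting that a single finite set $\{s_1,\dots,s_m\}$, chosen once and for all, works for every $\ell$ simultaneously; that is a uniform statement not supplied by Theorem~\ref{tateSS} and is essentially as strong as what you are trying to prove. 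The actual arguments in \cite{ZarhinMZ2,ZarhinInv,ZarhinCEJM14} do not proceed by this Frobenius-centraliser shortcut; they go through finiteness theorems for isogeny classes (in positive characteristic) and the structure of the Galois image, and the semisimplicity of $A_{\ell}$ is obtained separately rather than as a byproduct of the centraliser calculation. Your sketch gestures at the right circle of ideas ($\ell$-independence of Frobenius characteristic polynomials, Chebotarev, reductive monodromy) but does not assemble them into an argument that avoids this circularity.
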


\begin{prop}
\label{tech}
Let $k$ be a field finitely generated over its prime subfield, and let $A$ be an abelian variety over $k$. Let $P$ be an infinite set of primes that does not contain $\fchar(k)$. Suppose that for each $\ell\in P$ we are given 
a $\Gal(k)$-submodule $U_{\ell}\subset  \End_{\Z_{\ell}}(T_{\ell}(A))$ such that $U_{\ell}$ is a saturated $\Z_\ell$-submodule of $\End_{\Z_{\ell}}(T_{\ell}(A))$
and ${U_{\ell}}^{\Gal(k)}=0$.
Then for all but finitely many $\ell\in P$ we have
$(U_{\ell}/\ell)^{\Gal(k)}=0$.
\end{prop}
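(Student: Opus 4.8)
The plan is to show that for all but finitely many $\ell\in P$ the submodule $U_\ell$ is forced to lie inside the ``transcendental part'' of $\End_{\Z_\ell}(T_\ell(A))$, and that this transcendental part has no Galois invariants modulo $\ell$. Write $E_\ell=\End_{\Z_\ell}(T_\ell(A))$ with the perfect, symmetric, Galois-invariant trace form $\psi_\ell$ recalled above, and set $R_\ell:=\End(A)\otimes\Z_\ell$. By Theorem \ref{tateSS}(i) we have $R_\ell=\End_{\Gal(k)}(T_\ell(A))=E_\ell^{\Gal(k)}$, and as a centraliser it is a saturated $\Z_\ell$-submodule of $E_\ell$; let $S_\ell:=R_\ell^{\perp}$ be its orthogonal complement for $\psi_\ell$, again a saturated $\Gal(k)$-submodule.

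First I would prove that $U_\ell\subseteq S_\ell$ for every $\ell\in P$. By Theorem \ref{tateSS}(ii) the Galois module $V_\ell(A)$ is semisimple, hence so is $E_\ell\otimes\Q_\ell\cong V_\ell(A)\otimes_{\Q_\ell}V_\ell(A)^{\vee}$, since a tensor product of semisimple representations in characteristic $0$ is semisimple. Its trivial isotypic component is $(E_\ell\otimes\Q_\ell)^{\Gal(k)}=\End_{\Gal(k)}(V_\ell(A))=R_\ell\otimes\Q_\ell$, again by Theorem \ref{tateSS}(i); and because $\psi_\ell$ is non-degenerate, Galois-invariant, and restricts to a non-degenerate form on $R_\ell\otimes\Q_\ell$ (its discriminant is the non-zero integer $d_A$), the span of the non-trivial isotypic components equals $(R_\ell\otimes\Q_\ell)^{\perp}$. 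Now $U_\ell\otimes\Q_\ell$ is a semisimple submodule with $(U_\ell\otimes\Q_\ell)^{\Gal(k)}=U_\ell^{\Gal(k)}\otimes\Q_\ell=0$, so it has no trivial constituent, hence is contained in $(R_\ell\otimes\Q_\ell)^{\perp}$; intersecting with $E_\ell$ yields $U_\ell\subseteq (R_\ell\otimes\Q_\ell)^{\perp}\cap E_\ell=S_\ell$.

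Next I would check that $(S_\ell/\ell)^{\Gal(k)}=0$ for all but finitely many $\ell$. For $\ell\nmid d_A$ the form $\psi_\ell$ is already perfect on $R_\ell$, so $E_\ell=R_\ell\perp S_\ell$ and therefore $E_\ell/\ell=\End_{\F_\ell}(A_\ell)$ decomposes as $R_\ell/\ell\oplus S_\ell/\ell$; in particular $(S_\ell/\ell)\cap(R_\ell/\ell)=0$. By Theorem \ref{finite}, for all but finitely many $\ell$ one has $(E_\ell/\ell)^{\Gal(k)}=\End_{\Gal(k)}(A_\ell)=\End(A)/\ell=R_\ell/\ell$, so for such $\ell$ not dividing $d_A$ we get $(S_\ell/\ell)^{\Gal(k)}\subseteq (S_\ell/\ell)\cap(R_\ell/\ell)=0$. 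For these $\ell$ the module $U_\ell$ is saturated in $E_\ell$ and contained in $S_\ell$, hence saturated in $S_\ell$, so the Galois-equivariant reduction map $U_\ell/\ell\to S_\ell/\ell$ is injective and $(U_\ell/\ell)^{\Gal(k)}\hookrightarrow (S_\ell/\ell)^{\Gal(k)}=0$, as desired.

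I expect the one genuinely substantive step to be the inclusion $U_\ell\subseteq S_\ell$: it is what uses the deep inputs (semisimplicity of $V_\ell(A)$ and the endomorphism form of the Tate conjecture, Theorem \ref{tateSS}), and the trace form is essential there to identify the non-trivial isotypic part with an orthogonal complement defined over $\Z_\ell$. After that the argument is elementary bookkeeping with lattices; the two points requiring care are that reduction modulo $\ell$ preserves an inclusion of saturated submodules — this is precisely where the hypothesis that $U_\ell$ be saturated is used — and that one must discard the finitely many primes dividing $d_A$ together with the finitely many exceptional primes of Theorems \ref{tateSS} and \ref{finite}.
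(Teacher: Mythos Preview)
Your proof is correct and follows essentially the same route as the paper: both use semisimplicity of $V_\ell(A)$ (and hence of $\End_{\Q_\ell}(V_\ell(A))$, via Chevalley) together with Theorem~\ref{tateSS}(i) to force $U_\ell$ into the orthogonal complement $S_\ell$ of $\End(A)\otimes\Z_\ell$, then invoke the splitting $E_\ell=R_\ell\oplus S_\ell$ for $\ell\nmid d_A$ and Theorem~\ref{finite} to kill $(S_\ell/\ell)^{\Gal(k)}$, finishing by saturation. The only cosmetic differences are that you phrase the orthogonality step via isotypic components while the paper argues it directly, and that your closing remark about ``exceptional primes of Theorem~\ref{tateSS}'' is a slip---that theorem holds for every $\ell\neq\fchar(k)$.
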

\begin{proof}
By Theorem \ref{tateSS}(ii) the Galois module $V_{\ell}(A)$ is 
semisimple, and by a theorem of Chevalley
\cite[p. 88] {Chev} this implies that 
$\End_{\Q_{\ell}}(V_{\ell}(A))$ is also semisimple.
The Galois submodules $U_{\ell}\otimes\Q_{\ell}$ and 
$\End_{\Gal(k)}(V_{\ell}(A))$ 
are orthogonal in $\End_{\Q_{\ell}}(V_{\ell}(A))$ 
with respect to the Galois-invariant bilinear form $\psi_{\ell}$,
because otherwise $(U_{\ell}\otimes\Q_{\ell})^{\Gal(k)}\not=0$, 
which contradicts the assumption ${U_{\ell}}^{\Gal(k)}=0$. 
It follows that $\End(A)\otimes\Z_{\ell}$ and 
$U_{\ell}$ are orthogonal, too.

Now assume that $\ell\in P$ does not divide $d_A$. 
Then the $\Z_{\ell}$-bilinear symmetric Galois-invariant form
$$\psi_{\ell}:\ \End(A)\otimes\Z_{\ell} \ \ \times \ \ \End(A)\otimes\Z_{\ell}
\ \to  \ \Z_{\ell}$$
is perfect. Thus the Galois module $\End_{\Z_{\ell}}(T_{\ell}(A))$ 
splits into a direct sum
$$\End_{\Z_{\ell}}(T_{\ell}(A))=(\End(A)\otimes\Z_{\ell}) \oplus S_{\ell},$$
where $S_\ell$ is the orthogonal complement to $\End(A)\otimes\Z_{\ell}$.
It is clear that $S_{\ell}$ is a Galois-stable saturated 
$\Z_{\ell}$-submodule of $\End_{\Z_{\ell}}(T_{\ell}(A))$.
We have natural isomorphisms of Galois modules
$$ \End_{\Z_{\ell}}(T_{\ell}(A))/\ell =
\End_{\Z_{\ell}/\ell}(T_{\ell}(A)/\ell)= \End_{\F_{\ell}}(A_{\ell}).$$
Thus reducing modulo $\ell$, we obtain a Galois-invariant decomposition
$$ \End_{\F_{\ell}}(A_{\ell})=\End(A)/\ell 
\oplus S_{\ell}/\ell.$$ 
This gives rise to
$$\End_{\Gal(k)}(A_{\ell})=\End(A)/\ell\oplus 
(S_{\ell}/\ell)^{\Gal(k)}.$$
Theorem \ref{finite} implies that for all but finitely many 
$\ell\in P$ we have $(S_{\ell}/\ell)^{\Gal(k)}=0$.
Since $U_{\ell}$ is orthogonal to $\End(A)\otimes\Z_{\ell}$, 
we have $U_{\ell} \subset S_{\ell}$. Moreover,
$U_{\ell}$ is a saturated $\Z_{\ell}$-submodule of $S_{\ell}$,
and this implies that the natural map $U_{\ell}/\ell \to S_\ell/\ell$
is injective. We conclude that $(U_\ell/\ell)^{\Gal(k)}=0$
for all but finitely many $\ell\in P$.
\end{proof}

Recall if $\fchar(k)$ does not divide a positive integer $n$, then
we have a canonical isomorphism of Galois modules
$$\H_{\et}^1(\bar{A},\Z/n)=\Hom_{\Z/n}(A_n,\Z/n).$$
For a prime $\ell\not=\fchar(k)$ we deduce canonical isomorphisms
of Galois modules
$$ \H_{\et}^1(\bar{A},\Z_{\ell})=\Hom_{\Z_{\ell}}(T_{\ell}(A),\Z_{\ell}),\quad
\H_{\et}^1(\bar{A},\Q_{\ell})=\Hom_{\Q_{\ell}}(V_{\ell}(A),\Q_{\ell}),$$
$$ \End_{\Z_{\ell}}( \H_{\et}^1(\bar{A},\Z_{\ell}))\cong  \End_{\Z_{\ell}}(T_{\ell}(A)),$$
where the last identification is an anti-isomorphism of rings. 
This allows us to restate Proposition \ref{tech} in the following form.

\begin{cor}
\label{techCOH}
Let $k$ be a field finitely generated over its prime subfield, and let $A$ be an abelian variety over $k$. Let $P$ be an infinite set of primes that does not contain $\fchar(k)$. Suppose that for each $\ell\in P$ we are given a  $\Gal(k)$-submodule $$U_{\ell}\subset \End_{\Z_{\ell}} (\H_{\et}^1(\bar{A},\Z_{\ell}))$$ such that
$U_{\ell}$ is a saturated $\Z_\ell$-submodule of $\End_{\Z_{\ell}} (\H_{\et}^1(\bar{A},\Z_{\ell}))$
and ${U_{\ell}}^{\Gal(k)}=0$.
Then for all but finitely many $\ell\in P$ we have 
$(U_{\ell}/\ell)^{\Gal(k)}=0$.
\end{cor}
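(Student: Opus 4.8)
The plan is to deduce Corollary \ref{techCOH} formally from Proposition \ref{tech} by transporting the hypotheses along the canonical identification
\[
\End_{\Z_{\ell}}(\H_{\et}^1(\bar{A},\Z_{\ell}))\;\cong\;\End_{\Z_{\ell}}(T_{\ell}(A))
\]
recalled just before the statement. Although this map is an \emph{anti}-isomorphism of rings — it is the transpose map attached to the perfect pairing between $\H_{\et}^1(\bar A,\Z_\ell)$ and $T_\ell(A)$ — neither the hypotheses nor the conclusion of Proposition \ref{tech} refer to the ring structure, only to the underlying structure of a free $\Z_\ell$-module equipped with a continuous $\Gal(k)$-action. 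With respect to that structure the identification is an isomorphism of $\Gal(k)$-modules, because the pairing from which it is built is itself $\Gal(k)$-equivariant.

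First I would fix, for each $\ell\in P$, the image $U'_\ell\subset \End_{\Z_\ell}(T_\ell(A))$ of the given submodule $U_\ell$ under the above isomorphism, and check the three properties needed to feed the family $\{U'_\ell\}_{\ell\in P}$ into Proposition \ref{tech}. Being the image of a $\Gal(k)$-submodule under a $\Gal(k)$-equivariant map, $U'_\ell$ is a $\Gal(k)$-submodule. Being the image of a saturated $\Z_\ell$-submodule under an isomorphism of free $\Z_\ell$-modules, $U'_\ell$ is again saturated, since the two quotients are isomorphic and hence simultaneously torsion-free. And since the formation of $\Gal(k)$-invariants is functorial, $(U'_\ell)^{\Gal(k)}$ is the isomorphic image of $U_\ell^{\Gal(k)}=0$, hence equals $0$.

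Then Proposition \ref{tech}, applied to the infinite set $P$ and the family $\{U'_\ell\}$, gives $(U'_\ell/\ell)^{\Gal(k)}=0$ for all but finitely many $\ell\in P$. To conclude, I would observe that reduction modulo $\ell$ is functorial, so an isomorphism of free $\Z_\ell$-modules carrying $U_\ell$ onto $U'_\ell$ induces an isomorphism of $\F_\ell[\Gal(k)]$-modules carrying $U_\ell/\ell$ onto $U'_\ell/\ell$; taking invariants yields $(U_\ell/\ell)^{\Gal(k)}\cong (U'_\ell/\ell)^{\Gal(k)}=0$ for all but finitely many $\ell\in P$, as required. There is essentially no obstacle here: the only point requiring any care is the verification that the canonical anti-isomorphism of rings is genuinely $\Gal(k)$-equivariant and compatible with saturation and with reduction mod $\ell$, and each of these is immediate once one traces the identification back to the $\Gal(k)$-equivariant perfect pairing $T_\ell(A)\times \H_{\et}^1(\bar A,\Z_\ell)\to\Z_\ell$.
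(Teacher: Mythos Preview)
Your argument is correct and is exactly the paper's approach: the corollary is obtained by transporting Proposition~\ref{tech} through the canonical $\Gal(k)$-equivariant identification $\End_{\Z_\ell}(\H_{\et}^1(\bar A,\Z_\ell))\cong\End_{\Z_\ell}(T_\ell(A))$, noting that only the $\Z_\ell[\Gal(k)]$-module structure (not the ring structure) is relevant. The paper in fact states the corollary as a direct restatement of Proposition~\ref{tech} without further proof.
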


\begin{thm}
\label{SScohom}
Let $k$ be a field finitely generated over its prime subfield, and 
let $A$ be an abelian variety over $k$. Then

\smallskip

{\rm (i)} the $\Gal(k)$-module 
$\End_{\Q_{\ell}}( \H_{\et}^1(\bar{A},\Q_{\ell}))$ is semisimple for any prime $\ell \ne \fchar(k)$;

\smallskip

{\rm (ii)} the $\Gal(k)$-module $\End_{\F_{\ell}}(\H_{\et}^1(\bar{A},\Z/\ell))$ is semisimple for all but finitely many primes $\ell\ne \fchar(k)$.
\end{thm}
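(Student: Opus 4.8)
The plan is to reduce both parts to the statements about Tate modules already available in the excerpt, using the canonical anti-isomorphism of $\Gal(k)$-rings $\End_{\Z_{\ell}}(\H_{\et}^1(\bar A,\Z_{\ell}))\cong\End_{\Z_{\ell}}(T_{\ell}(A))$ (and likewise with $\Z_{\ell}$ replaced by $\Q_{\ell}$ or $\F_{\ell}$). An anti-isomorphism sends submodules to submodules and respects the Galois action, so semisimplicity of one side is equivalent to semisimplicity of the other; hence it suffices to prove (i) that $\End_{\Q_{\ell}}(V_{\ell}(A))$ is a semisimple $\Gal(k)$-module for every $\ell\ne\fchar(k)$, and (ii) that $\End_{\F_{\ell}}(A_{\ell})$ is a semisimple $\Gal(k)$-module for all but finitely many $\ell\ne\fchar(k)$.

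For part (i): by Theorem \ref{tateSS}(ii) the $\Gal(k)$-module $V_{\ell}(A)$ is semisimple. The action of $\Gal(k)$ on $\End_{\Q_{\ell}}(V_{\ell}(A))=V_{\ell}(A)\otimes_{\Q_{\ell}}V_{\ell}(A)^{\vee}$ is the natural one on this tensor product. By the theorem of Chevalley cited in the proof of Proposition \ref{tech} (\cite[p. 88]{Chev}), if $\rho$ is a semisimple representation of a group over a field of characteristic zero then $\rho\otimes\rho^{\vee}$ — equivalently the conjugation action on $\End(\rho)$ — is again semisimple. This gives (i) directly.

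For part (ii): here characteristic-$\ell$ subtleties prevent us from invoking Chevalley's theorem verbatim, so instead I would apply Corollary \ref{techCOH}. Let $P$ be the set of all primes $\ell\ne\fchar(k)$. For each such $\ell$, decompose $\End_{\Q_{\ell}}(V_{\ell}(A))$, which is semisimple by (i), as the direct sum of its $\Gal(k)$-submodule $\End_{\Gal(k)}(V_{\ell}(A))=\End(A)\otimes\Q_{\ell}$ (Theorem \ref{tateSS}(i)) and an orthogonal-complement submodule with respect to $\psi_{\ell}$; intersecting the complement with the lattice $\End_{\Z_{\ell}}(T_{\ell}(A))$ and saturating, set $U_{\ell}$ to be that saturation. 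Then $U_{\ell}$ is a saturated Galois-stable $\Z_{\ell}$-submodule with $U_{\ell}\otimes\Q_{\ell}$ having no nonzero Galois invariants, so $U_{\ell}^{\Gal(k)}=0$. Corollary \ref{techCOH} then gives $(U_{\ell}/\ell)^{\Gal(k)}=0$ for all but finitely many $\ell$. Discarding in addition the finitely many $\ell$ dividing $d_A$, for the remaining $\ell$ the form $\psi_{\ell}$ is perfect on $\End(A)\otimes\Z_{\ell}$, so $\End_{\Z_{\ell}}(T_{\ell}(A))=(\End(A)\otimes\Z_{\ell})\oplus S_{\ell}$ with $S_{\ell}$ saturated and Galois-stable and $U_{\ell}=S_{\ell}$; reducing mod $\ell$ yields $\End_{\F_{\ell}}(A_{\ell})=(\End(A)/\ell)\oplus(S_{\ell}/\ell)$ with $(S_{\ell}/\ell)^{\Gal(k)}=0$. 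Finally, by Theorem \ref{finite}, after excluding finitely many more $\ell$ we have $\End(A)/\ell=\End_{\Gal(k)}(A_{\ell})$, and by the same theorem $A_{\ell}$ itself is a semisimple $\Gal(k)$-module; I would then argue that $\End(A)/\ell$, being a semisimple $\F_{\ell}$-algebra acting on the semisimple module $A_{\ell}$ with $\End_{\F_{\ell}}(A_{\ell})$ containing it as a Galois-invariant direct summand complemented by a summand with no invariants, forces $\End_{\F_{\ell}}(A_{\ell})$ to be a semisimple $\Gal(k)$-module — concretely, one shows each simple isotypic block of $A_{\ell}$ contributes a matrix-algebra summand on which the Galois action is by inner automorphisms, hence semisimple.

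The main obstacle I expect is the very last step of (ii): passing from "$A_{\ell}$ is semisimple and $\End(A)/\ell$ is its full Galois-centralizer" to "$\End_{\F_{\ell}}(A_{\ell})$ is semisimple as a $\Gal(k)$-module". In characteristic $\ell$ one cannot simply tensor, and one must be careful that the conjugation action of $\Gal(k)$ on the endomorphism algebra of a semisimple but not necessarily multiplicity-free module is again semisimple; the cleanest route is to use the decomposition $\End_{\F_{\ell}}(A_{\ell})=(\End(A)/\ell)\oplus(S_{\ell}/\ell)$ from Proposition \ref{tech}'s argument together with the vanishing $(S_{\ell}/\ell)^{\Gal(k)}=0$, and to show that $\End(A)/\ell$ is a semisimple Galois module because it is a semisimple $\F_{\ell}$-algebra on which $\Gal(k)$ acts by algebra automorphisms through a finite quotient of order prime to $\ell$ (after a further finite exclusion of $\ell$), invoking Maschke's theorem for that finite quotient.
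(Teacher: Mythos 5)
Your part (i) is fine and is the paper's own argument: transport the problem through the anti-isomorphism $\End_{\Q_{\ell}}(\H_{\et}^1(\bar{A},\Q_{\ell}))\cong\End_{\Q_{\ell}}(V_{\ell}(A))$, quote Theorem \ref{tateSS}(ii), and apply Chevalley's theorem.

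Part (ii), however, has a genuine gap, and it sits exactly where you say you expect trouble. The decomposition $\End_{\F_{\ell}}(A_{\ell})=(\End(A)/\ell)\oplus(S_{\ell}/\ell)$ together with $(S_{\ell}/\ell)^{\Gal(k)}=0$ gives no information about semisimplicity: a Galois module with no nonzero invariants can perfectly well be a non-split extension of nontrivial simple modules, so the vanishing of invariants of the complement does not make $\End_{\F_{\ell}}(A_{\ell})$ semisimple. (The summand $\End(A)/\ell$ carries the trivial conjugation action, so it is semisimple for free and your Maschke remark is not needed there; it does nothing for $S_{\ell}/\ell$, which is where the content lies, and no Maschke-type argument can apply to $S_{\ell}/\ell$ because the image of $\Gal(k)$ in $\Aut_{\F_{\ell}}(A_{\ell})$ typically has order divisible by $\ell$.) Your ``concrete'' claim that each isotypic block yields a matrix-algebra summand on which Galois acts ``by inner automorphisms, hence semisimply'' is not a proof and is false in general: the Galois module $\End_{\F_{\ell}}(A_{\ell})$ is $A_{\ell}\otimes A_{\ell}^{\vee}$, and in characteristic $\ell$ the tensor product (equivalently, the endomorphism algebra) of semisimple representations need not be semisimple when $\ell$ is small relative to the dimension -- this is precisely the phenomenon that blocks a verbatim use of Chevalley mod $\ell$. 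The missing ingredient, and the one the paper actually uses, is Serre's theorem \cite[Cor. 1]{SerreSS}: if $A_{\ell}$ is a semisimple $\Gal(k)$-module and $\ell>2\dim_{\F_{\ell}}(A_{\ell})-2=4\dim(A)-2$, then $\End_{\F_{\ell}}(A_{\ell})$ is semisimple. Combining this quantitative input with Theorem \ref{finite} (semisimplicity of $A_{\ell}$ for all but finitely many $\ell$) excludes only finitely many primes and closes (ii); without it, your argument does not go through.
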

\begin{proof} (i) We have a $\Q_{\ell}$-algebra anti-isomorphism
$$\End_{\Q_{\ell}}(V_{\ell}(A))\cong \End_{\Q_{\ell}}( \H_{\et}^1(\bar{A},\Q_{\ell})),$$
which is also an isomorphism of Galois modules. By Theorem \ref{tateSS}
the Galois module $V_{\ell}(A)$ is semisimple, and then Chevalley's theorem
\cite[p. 88] {Chev}  implies the semisimplicity of $\End_{\Q_{\ell}}(V_{\ell}(A))$.

(ii) We have an $\F_{\ell}$-algebra anti-isomorphism
$$ \End_{\F_{\ell}}( \H_{\et}^1(\bar{A},\Z/\ell))\cong  \End_{\F_{\ell}}(A_{\ell}),$$
which is also an isomorphism of Galois modules.
By Theorem \ref{finite} the Galois module $A_\ell$ is
semisimple for all but finitely many $\ell$.
By a theorem of Serre \cite[Cor. 1]{SerreSS}, if $\ell$ is greater than
$2\,\dim_{\F_{\ell}}(A_{\ell})-2=4\,\dim(A)-2$,
then the semisimplicity of $A_{\ell}$ implies the  semisimplicity of
the Galois module $\End_{\F_{\ell}}(A_{\ell})$. 
This finishes the proof of (ii).
\end{proof}

\section{Clifford Algebras and Kuga--Satake construction}
\label{clifford}

Let $\Lambda$ be a principal ideal domain and let $E$ 
be a free $\Lambda$-module of finite rank
with a non-degenerate quadratic form 
$q: E \to \Lambda$. 
Let $C(E,q)$ be the Clifford  algebra of $(E,q)$. Let
$\varrho: E \to C(E,q)$
be the canonical $\Lambda$-linear homomorphism satisfying
$$\varrho(x)^2=q(x)\in \Lambda \subset C(E,q).$$
The map $\varrho$ is injective by \cite[Sect. 19.3, Cor. 2 to Thm. 1]{Bourbaki},
so that $\varrho(E)\cong E$.
It follows from \cite[Sect. 19.3,  Thm. 1]{Bourbaki} that $C(E,q)$ is a free $\Lambda$-module of finite rank, and the $\Lambda$-submodule 
$\varrho(E)$ is a direct summand of the
$\Lambda$-module $C(E,q)$; in particular, it is a {\sl saturated} submodule.
Let
$$\mult_L: C(E,q)\to \End_{\Lambda}(C(E,q))$$
be the homomorphism of $\Lambda$-algebras that sends $a \in C(E,q)$ 
to the endomorphism $x \mapsto ax$.
Since $1 \in C(E,q)$, the homomorphism $\mult_L$ is injective. 
In particular, the $\Lambda$-algebras
$C(E,q)$ and $\mult_L(C(E,q))$ are isomorphic.  

We claim that $\mult_L(\varrho(E))$ is a saturated 
$\Lambda$-submodule of $\End_{\Lambda}(C(E,q))$. 
Taking into account that $\varrho(E)$ is saturated in $C(E,q)$,
it is enough to show that $\mult_L(C(E,q))$ is saturated in $\End_{\Lambda}(C(E,q))$.
Indeed, if $a\in C(E,q)$ is such that $\mult_L(a)$ is a divisible 
element in $\End_{\Lambda}(C(E,q))$, then
$\mult_L(a)(x)$ is divisible in $C(E,q)$ for all $x \in C(E,q)$. 
In particular, if we put $x=1$, then we see that
$a=\mult_L(a)(1)$ is divisible in $C(E,q)$. This proves our claim.

\medskip

Now let $\Lambda=\Z_\ell$ and let $E_\ell$ be
a free $\Z_\ell$-module of finite rank
with a non-degenerate quadratic form $q_\ell: E_\ell \to \Z_\ell$.
Let $\Aut(C(E_{\ell},q_{\ell}))$ be the group of automorphisms of 
the $\Z_{\ell}$-algebra $C(E_{\ell},q_{\ell})$.
If there is a continuous $\ell$-adic representation
$\rho_{\ell}: \Gal(k) \to \Aut_{\Z_{\ell}}(E_{\ell})$
that preserves $q_\ell$, then,
by a universal property of Clifford algebras, there is 
a representation
$$\rho_{\ell,\cl}: \Gal(k) \to \Aut(C(E_{\ell},q_{\ell}))$$
such that for any $\sigma \in\Gal(k)$ and any $x\in E_\ell$ we have
$\rho_{\ell,\cl}(\sigma)(\varrho(x))=\varrho(\rho_{\ell}(\sigma)(x))$.
Using \cite[Sect. 19.3,  Thm. 1]{Bourbaki} one easily checks that 
$\rho_{\ell,\cl}$ is continuous.

\begin{prop} \label{semisimplek}
Let $k$ be a field finitely generated over its prime subfield.
Let $P$ be an infinite set of primes not containing $\fchar(k)$.
Assume that for each $\ell\in P$ we are given 
a free $\Z_{\ell}$-module $E_{\ell}$ with a non-degenerate quadratic form
$q_{\ell}: E_{\ell}\to \Z_{\ell}$ and a continuous $\ell$-adic representation
$\rho_{\ell}: \Gal(k) \to\Aut_{\Z_{\ell}}(E_{\ell})$ preserving $q_\ell$.

Suppose that there is an abelian variety $A$ over 
a finite separable field extension $k'$ of $k$, 
and for each prime $\ell\in P$ there is an
isomorphism of $\Z_\ell$-modules
$\iota_{\ell}:C(E_{\ell},q_{\ell})\tilde\lra \H_{\et}^1(\bar A,\Z_{\ell}) $
such that the composition of injective maps of $\Z_\ell$-modules
$$ 
E_{\ell}\stackrel{\varrho}{\xrightarrow{\hspace*{5mm}}} C(E_{\ell},q_{\ell})
\stackrel{\iota_\ell}{\xrightarrow{\hspace*{6mm}}}
\H_{\et}^1(\bar A,\Z_{\ell})
\stackrel{\mult_L}{\xrightarrow{\hspace*{8mm}}}
\End_{\Z_{\ell}}(\H_{\et}^1(\bar A,\Z_{\ell}) )$$
is a homomorphism of $\Gal(k')$-modules.

Then the $\Gal(k)$-module $E_{\ell}\otimes_{\Z_{\ell}}\Q_{\ell}$ 
is semisimple for each $\ell\in P$,
and the $\Gal(k)$-module $E_{\ell}/\ell$ is semisimple 
for all but finitely many $\ell \in P$.
\end{prop}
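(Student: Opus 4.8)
The plan is to reduce the semisimplicity of $E_\ell$ (rationally, and mod $\ell$ for almost all $\ell$) to the corresponding statements for $\End(\H^1_\et(\bar A,-))$ supplied by Theorem \ref{SScohom}, using the hypothesis that $E_\ell$ embeds Galois-equivariantly as a saturated submodule of $\End_{\Z_\ell}(\H^1_\et(\bar A,\Z_\ell))$ via $\mult_L\circ\iota_\ell\circ\varrho$.

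First I would observe that by the discussion of Clifford algebras preceding the proposition, $\varrho(E_\ell)$ is a saturated (direct summand) $\Z_\ell$-submodule of $C(E_\ell,q_\ell)$, $\mult_L$ is injective, and $\mult_L(\varrho(E_\ell))$ is a saturated $\Z_\ell$-submodule of $\End_{\Z_\ell}(C(E_\ell,q_\ell))$. Transporting along the $\Z_\ell$-module isomorphism $\iota_\ell$ — which induces a $\Z_\ell$-algebra (anti-)isomorphism $\End_{\Z_\ell}(C(E_\ell,q_\ell))\cong\End_{\Z_\ell}(\H^1_\et(\bar A,\Z_\ell))$ — the composite $j_\ell:=\mult_L\circ\iota_\ell\circ\varrho$ identifies $E_\ell$ with a saturated $\Z_\ell$-submodule $W_\ell\subset\End_{\Z_\ell}(\H^1_\et(\bar A,\Z_\ell))$, and by hypothesis $j_\ell$ is $\Gal(k')$-equivariant, so $W_\ell$ is a $\Gal(k')$-submodule. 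Note that $W_\ell/\ell\cong E_\ell/\ell$ because $W_\ell$ is saturated.

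Next, over $\Q_\ell$: by Theorem \ref{SScohom}(i) the $\Gal(k)$-module $\End_{\Q_\ell}(\H^1_\et(\bar A,\Q_\ell))$ is semisimple, hence so is its restriction to the finite-index subgroup $\Gal(k')$ (semisimplicity is inherited by restriction to finite-index subgroups in characteristic zero). A submodule of a semisimple module is semisimple, so $W_\ell\otimes\Q_\ell\cong E_\ell\otimes\Q_\ell$ is a semisimple $\Gal(k')$-module; since $[\Gal(k):\Gal(k')]<\infty$ and we are in characteristic zero, inducing up (or applying the standard fact that a $\Gal(k)$-module semisimple over an open finite-index subgroup is itself semisimple) shows $E_\ell\otimes\Q_\ell$ is a semisimple $\Gal(k)$-module. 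For the mod-$\ell$ statement: by Theorem \ref{SScohom}(ii), for all but finitely many $\ell$ the $\Gal(k)$-module $\End_{\F_\ell}(\H^1_\et(\bar A,\Z/\ell))$ is semisimple; enlarging the finite exceptional set so that also $\ell>[\Gal(k):\Gal(k')]$, the restriction to $\Gal(k')$ stays semisimple (this is where the characteristic-$\ell$ subtlety enters — one uses that the index is prime to $\ell$, so restriction to $\Gal(k')$ preserves semisimplicity, and conversely a $\Gal(k)$-module semisimple over such a $\Gal(k')$ is semisimple). Then $W_\ell/\ell\cong E_\ell/\ell$, being a $\Gal(k')$-submodule of a semisimple $\F_\ell[\Gal(k')]$-module, is semisimple as a $\Gal(k')$-module, hence as a $\Gal(k)$-module for the same index reason.

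The main obstacle is the careful bookkeeping around passing between $\Gal(k)$ and $\Gal(k')$ in the modular case: one needs $\ell$ coprime to the index $[k':k]$ so that both directions (restriction to $\Gal(k')$, and descent from $\Gal(k')$ back to $\Gal(k)$) preserve semisimplicity of $\F_\ell[\Gal]$-modules — the relevant input being that for a finite-index subgroup $H\le G$ of index prime to the characteristic, an $H$-semisimple $G$-module is $G$-semisimple and vice versa (Clifford theory / the averaging argument using a splitting of the trace). Since the exceptional set from Theorem \ref{SScohom}(ii) is finite and $\{\ell : \ell\mid[k':k]\}$ is finite, enlarging by finitely many primes is harmless. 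Everything else — saturatedness of $\mult_L(\varrho(E_\ell))$, the identification $W_\ell/\ell\cong E_\ell/\ell$, and "submodule of semisimple is semisimple" — is routine and already in place from the preamble and Theorem \ref{SScohom}.
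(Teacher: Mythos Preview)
Your approach is essentially the paper's, but there is a real slip in the execution: the abelian variety $A$ is defined over $k'$, not over $k$, so there is \emph{no} $\Gal(k)$-module structure on $\H^1_\et(\bar A,\Z_\ell)$ or on $\End_{\Q_\ell}(\H^1_\et(\bar A,\Q_\ell))$. Hence the sentence ``by Theorem~\ref{SScohom}(i) the $\Gal(k)$-module $\End_{\Q_\ell}(\H^1_\et(\bar A,\Q_\ell))$ is semisimple, hence so is its restriction to $\Gal(k')$'' is not meaningful, and similarly for the mod~$\ell$ part. The correct move (which the paper makes) is to apply Theorem~\ref{SScohom} directly to $A$ over the finitely generated field $k'$, obtaining that $\End_{\Q_\ell}(\H^1_\et(\bar A,\Q_\ell))$ is semisimple as a $\Gal(k')$-module for every $\ell\in P$, and that $\End_{\F_\ell}(\H^1_\et(\bar A,\F_\ell))$ is semisimple as a $\Gal(k')$-module for all but finitely many $\ell$.

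This correction actually simplifies your argument: the detour through ``restriction from $\Gal(k)$ to $\Gal(k')$ preserves semisimplicity'' disappears, and only the ascent from $\Gal(k')$-semisimplicity to $\Gal(k)$-semisimplicity is needed. The paper handles that ascent by first replacing $k'$ with its normal closure over $k$ (harmless, since the Kuga--Satake hypothesis persists) and then quoting \cite[Lemma~5(b)]{SerreSS}; in the mod~$\ell$ case one must additionally discard the finitely many primes dividing $[k':k]$, exactly as you note. With this fix, the remaining bookkeeping --- saturatedness of $\mult_L(\iota_\ell(\varrho(E_\ell)))$, the identification $W_\ell/\ell\cong E_\ell/\ell$, and ``submodule of semisimple is semisimple'' --- matches the paper exactly.
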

\begin{proof} Replacing $k'$ by its normal closure over $k$, 
we can assume that $k'$ is a finite Galois extension of $k$,
so that $\Gal(k')\subset\Gal(k)$ is an (open) normal subgroup of index $[k':k]$.
In the beginning of this section we have seen that
$\mult_L (\varrho(E_{\ell}))$ is a saturated $\Z_{\ell}$-submodule of
$\End_{\Z_{\ell}} ( C(E_{\ell},q_{\ell}))$. 
Since $\iota_{\ell}$ is an isomorphism, 
$\mult_L(\iota_{\ell} (\varrho(E_{\ell})))$  is a saturated 
$\Z_{\ell}$-submodule of $\End_{\Z_{\ell}}(\H_{\et}^1(A,\Z_{\ell}))$
isomorphic to $E_{\ell}$ as a  $\Gal(k')$-module.
This immediately implies that the $\Gal(k')$-module
$E_{\ell}/\ell$  is isomorphic to a submodule of
$$\End_{\Z_{\ell}}(\H_{\et}^1(A,\Z_{\ell}) )/\ell=\End_{\F_{\ell}}(\H_{\et}^1(A,\F_{\ell})).$$
The $\Q_{\ell}$-vector space
$E_{\ell}\otimes_{\Z_{\ell}}\Q_{\ell}$ is
a $\Gal(k)$-module. Considered as a $\Gal(k')$-module, 
it is isomorphic to a submodule of
$$\End_{\Z_{\ell}}(\H_{\et}^1(A,\Z_{\ell}) )\otimes_{\Z_{\ell}}\Q_{\ell}=\End_{\Q_{\ell}}(\H_{\et}^1(A,\Q_{\ell})).$$
By applying Theorem \ref{SScohom} to
the abelian variety $A$ over $k'$ we obtain that the 
$\Gal(k')$-module $E_{\ell}\otimes_{\Z_{\ell}}\Q_{\ell}$ 
is semisimple for each $\ell\in P$,
and that the $\Gal(k')$-module $E_{\ell}/\ell$ 
is semisimple for all but finitely many $\ell \in P$.
The semisimplicity of  
$E_{\ell}\otimes_{\Z_{\ell}}\Q_{\ell}$ as a $\Gal(k)$-module follows 
from its semisimplicity as a $\Gal(k')$-module by
\cite[Lemma 5 (b), p. 523]{SerreSS}. By the same lemma,
if we further exclude the finitely many primes dividing $[k':k]$, 
the semisimplicity of the $\Gal(k)$-module $E_{\ell}/\ell$
follows from the semisimplicity of the $\Gal(k')$-module $E_{\ell}/\ell$.
\end{proof}

We finish this section with the following complement to
Proposition \ref{semisimplek}.

\begin{prop} \label{add}
In the assumptions of Proposition \ref{semisimplek}
suppose that for each prime $\ell\in P$ 
we have a $\Gal(k')$-submodule 
$U_{\ell}$ of $E_{\ell}$ such that $U_\ell$ is saturated 
in $E_\ell$ and ${U_{\ell}}^{\Gal(k')}=0$.
Then $(U_{\ell}/\ell)^{\Gal(k')}=0$ 
for all but finitely many $\ell\in P$.
\end{prop}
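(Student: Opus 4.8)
The plan is to mimic the proof of Proposition~\ref{tech} (equivalently Corollary~\ref{techCOH}), but working inside the Clifford algebra rather than directly inside $\End_{\Z_\ell}(\H^1_\et(\bar A,\Z_\ell))$, and then transporting the conclusion across $\iota_\ell$ and $\mult_L$. The point is that $U_\ell$ is given as a saturated $\Gal(k')$-submodule of $E_\ell$ with $U_\ell^{\Gal(k')}=0$, and we want to conclude $(U_\ell/\ell)^{\Gal(k')}=0$ for all but finitely many $\ell\in P$. Since the hypotheses and conclusion only involve the $\Gal(k')$-action and the abelian variety $A$ is defined over $k'$, I would simply replace $k$ by $k'$ throughout; thus it suffices to prove the statement under the additional assumption $k'=k$, which I assume from now on.

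First I would push $U_\ell$ forward: set $\widetilde U_\ell:=\mult_L(\iota_\ell(\varrho(U_\ell)))\subset \End_{\Z_\ell}(\H^1_\et(\bar A,\Z_\ell))$. By the hypothesis of Proposition~\ref{semisimplek}, the composite $\mult_L\circ\iota_\ell\circ\varrho$ is an injective homomorphism of $\Gal(k)$-modules and of $\Z_\ell$-modules, so $\widetilde U_\ell$ is a $\Gal(k)$-submodule with $\widetilde U_\ell^{\Gal(k)}=0$ (isomorphic to $U_\ell$ as a Galois module) and the natural map $U_\ell/\ell\to\widetilde U_\ell/\ell$ is an isomorphism. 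The key remaining point is \emph{saturatedness}: since $U_\ell$ is saturated in $E_\ell$ and $\varrho(E_\ell)$ is a direct summand (hence saturated) in $C(E_\ell,q_\ell)$, the submodule $\varrho(U_\ell)$ is saturated in $C(E_\ell,q_\ell)$; applying the isomorphism $\iota_\ell$ preserves this, and since (as shown in Section~\ref{clifford}) $\mult_L(C(E_\ell,q_\ell))$ is saturated in $\End_{\Z_\ell}(C(E_\ell,q_\ell))$, the image $\widetilde U_\ell$ is saturated in $\End_{\Z_\ell}(\H^1_\et(\bar A,\Z_\ell))\cong\End_{\Z_\ell}(C(E_\ell,q_\ell))$. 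Thus $\widetilde U_\ell$ satisfies exactly the hypotheses of Corollary~\ref{techCOH} (with $A$ over $k$, with the same infinite set $P\not\ni\fchar(k)$).

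Now I would invoke Corollary~\ref{techCOH}: it yields $(\widetilde U_\ell/\ell)^{\Gal(k)}=0$ for all but finitely many $\ell\in P$. Composing with the isomorphism of Galois modules $U_\ell/\ell\xrightarrow{\ \sim\ }\widetilde U_\ell/\ell$ gives $(U_\ell/\ell)^{\Gal(k)}=0$ for all but finitely many $\ell\in P$, which is the assertion (after the reduction $k'=k$, i.e. $(U_\ell/\ell)^{\Gal(k')}=0$ in the original notation). I expect the only real subtlety — the "main obstacle" — to be bookkeeping the chain of saturatedness claims correctly: one must be careful that saturatedness is preserved under each of the three maps $\varrho$, $\iota_\ell$, $\mult_L$, which relies respectively on $\varrho(E_\ell)$ being a direct summand of $C(E_\ell,q_\ell)$, on $\iota_\ell$ being a $\Z_\ell$-module \emph{isomorphism} (not merely injective), and on the saturatedness of $\mult_L(C(E_\ell,q_\ell))$ in $\End_{\Z_\ell}(C(E_\ell,q_\ell))$ established earlier in this section. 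Everything else is a direct citation of Corollary~\ref{techCOH}.
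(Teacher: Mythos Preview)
Your proposal is correct and follows essentially the same route as the paper: push $U_\ell$ forward via $\mult_L\circ\iota_\ell\circ\varrho$ to a saturated $\Gal(k')$-submodule of $\End_{\Z_\ell}(\H^1_\et(\bar A,\Z_\ell))$ with trivial invariants, then invoke Corollary~\ref{techCOH}. The paper's proof is terser but identical in substance; your more detailed bookkeeping of the saturatedness chain and the explicit reduction to $k'=k$ are harmless elaborations.
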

\begin{proof} Since $U_\ell$ is saturated in $E_\ell$ and 
$\mult_L(\iota_{\ell}( \varrho(E_{\ell})))\cong E_\ell$ is saturated in 
$\End_{\Z_{\ell}}(\H_{\et}^1(A,\Z_{\ell}) )$, we see that
$\mult_L(\iota_{\ell}( \varrho(U_{\ell})))$
is a saturated $\Z_\ell$-submodule of 
$\End_{\Z_{\ell}}(\H_{\et}^1(A,\Z_{\ell}) )$. 
It is $\Gal(k')$-stable and isomorphic to $U_\ell$ as a Galois module.
The proposition now follows from Corollary \ref{techCOH}.
\end{proof}

\section{K3 surfaces}

\begin{thm}
\label{finiteK3}
Let $k$ be a field of odd characteristic $p$ that is finitely generated
over $\F_p$. Let $X$ be a K3 surface over $k$. Then

{\rm (i)} for any prime $\ell\not=p$ the $\Gal(k)$-modules
$\H_{\et}^2(\bar{X},\Q_{\ell}(1))$ and
$T(\bar X)_{\ell}\otimes_{\Z_{\ell}}\Q_{\ell}$
are semisimple;

{\rm (ii)}
for all but finitely many primes $\ell$ the $\Gal(k)$-modules
$\H_{\et}^2(\bar{X},\mu_{\ell})$ and
$T(\bar X)_{\ell}/\ell$ are semisimple;

{\rm (iii)}
for all but finitely many primes $\ell$ we have
$(T(\bar X)_{\ell}/\ell)^{\Gal(k)}=0$.
\end{thm}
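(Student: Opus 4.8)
The plan is to derive all three parts from the Kuga--Satake construction in odd characteristic, feeding it into Propositions \ref{semisimplek} and \ref{add}. Fix an ample class $h\in\NS(X)$ on $X$; it lies in $\NS(X)=\NS(\bar X)^{\Gal(k)}$ because the polarisation is defined over $k$. For a prime $\ell\ne p$ let $E_\ell\subset\H^2_\et(\bar X,\Z_\ell(1))$ be the orthogonal complement of $h$ with respect to the Poincar\'e pairing $e_{X,\ell}$, equipped with the induced quadratic form $q_\ell$, which is non-degenerate; then $E_\ell$ is a Galois-stable saturated $\Z_\ell$-submodule of $\H^2_\et(\bar X,\Z_\ell(1))$. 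Since $h\in\NS(\bar X)$ is orthogonal to every transcendental class, we have $T(\bar X)_\ell\subset E_\ell$, and $T(\bar X)_\ell$ is saturated in $E_\ell$. By K. Madapusi Pera \cite[Thm. 4.17]{Pera} (here $p\ne 2$ is used), after replacing $k$ by a suitable finite separable extension $k'$ there is a Kuga--Satake abelian variety $A$ over $k'$ and, for all $\ell$ in a cofinite set $P$ of primes $\ne p$, an isomorphism of $\Z_\ell$-modules $\iota_\ell\colon C(E_\ell,q_\ell)\xrightarrow{\sim}\H^1_\et(\bar A,\Z_\ell)$ such that $\mult_L\circ\iota_\ell\circ\varrho$ is $\Gal(k')$-equivariant; the corresponding statement holds rationally for every $\ell\ne p$. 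Thus the hypotheses of Propositions \ref{semisimplek} and \ref{add} are met with this $A$, $k'$, $E_\ell$, $q_\ell$, and with $U_\ell=T(\bar X)_\ell$.

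For part (i), note that the finite-index inclusion $(\NS(\bar X)\otimes\Z_\ell)\oplus T(\bar X)_\ell\subset\H^2_\et(\bar X,\Z_\ell(1))$ recalled in the introduction becomes an equality after $\otimes_{\Z_\ell}\Q_\ell$: $\H^2_\et(\bar X,\Q_\ell(1))=(\NS(\bar X)\otimes\Q_\ell)\oplus(T(\bar X)_\ell\otimes_{\Z_\ell}\Q_\ell)$. The first summand is semisimple because $\Gal(k)$ acts on it through the finite group $\Gal(k'/k)$ and the coefficients have characteristic zero. For the second, the rational Kuga--Satake realisation exhibits $E_\ell\otimes_{\Z_\ell}\Q_\ell$ as a $\Gal(k')$-submodule of $\End_{\Q_\ell}(\H^1_\et(\bar A,\Q_\ell))$, which is semisimple by Theorem \ref{SScohom}(i); a submodule of a semisimple module is semisimple, and semisimplicity descends from $\Gal(k')$ to $\Gal(k)$ by \cite[Lemma 5(b), p. 523]{SerreSS}. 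Hence $E_\ell\otimes_{\Z_\ell}\Q_\ell$, and therefore its $\Gal(k)$-submodule $T(\bar X)_\ell\otimes_{\Z_\ell}\Q_\ell$, is semisimple for every $\ell\ne p$, and so is $\H^2_\et(\bar X,\Q_\ell(1))$. (This also reproves Theorem \ref{semisimpleK3}.)

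For parts (ii) and (iii), Proposition \ref{semisimplek} applied to the cofinite set $P$ gives that $E_\ell/\ell$ is semisimple for all but finitely many $\ell$, hence so is its $\Gal(k)$-submodule $T(\bar X)_\ell/\ell$. One always has $\H^2_\et(\bar X,\mu_\ell)=\H^2_\et(\bar X,\Z_\ell(1))/\ell$, and for $\ell\nmid d_X$ this equals $(\NS(\bar X)/\ell)\oplus(T(\bar X)_\ell/\ell)$ by the splitting recalled in the introduction; excluding in addition the primes dividing $[k':k]$, Maschke's theorem makes $\NS(\bar X)/\ell$ semisimple, and a direct sum of two semisimple modules is semisimple, which proves (ii). For (iii), $T(\bar X)_\ell^{\Gal(k')}=0$ by Madapusi Pera's form of the Tate conjecture (it holds over every finite separable extension of $k$); since $T(\bar X)_\ell$ is a saturated $\Gal(k')$-submodule of $E_\ell$ with vanishing $\Gal(k')$-invariants, Proposition \ref{add} gives $(T(\bar X)_\ell/\ell)^{\Gal(k')}=0$, and hence $(T(\bar X)_\ell/\ell)^{\Gal(k)}=0$, for all but finitely many $\ell$.

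The only non-formal step is extracting from \cite{Pera} the \emph{integral} Galois-equivariant Clifford-algebra identification in exactly the form demanded by Proposition \ref{semisimplek}: one must fix the normalisation of the quadratic form so that it is $C(E_\ell,q_\ell)$ itself, and not a twist or a finite-index sublattice, that is identified with $\H^1_\et(\bar A,\Z_\ell)$; keep track of the finite extension $k'$; and isolate the finite set of primes where integrality (as opposed to the rational statement) may fail. Once this input is in place, parts (i)--(iii) follow formally, using only Propositions \ref{semisimplek} and \ref{add}, Theorem \ref{SScohom}, and Maschke's theorem.
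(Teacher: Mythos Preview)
Your proof is correct and follows the same strategy as the paper: feed Madapusi Pera's Kuga--Satake construction into Propositions \ref{semisimplek} and \ref{add}. There are, however, two differences in packaging worth noting.

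First, the paper decomposes along the single class $\xi$ rather than along all of $\NS(\bar X)$: it sets $E_\ell=\PH^2_\et(\bar X,\Z_\ell(1))$, the primitive cohomology, and for $(\ell,2d)=1$ writes $\H^2_\et(\bar X,\mu_\ell)=\F_\ell\bar\xi\oplus E_\ell/\ell$. This avoids the Maschke step in your proof of (ii), since $\F_\ell\bar\xi$ is one-dimensional with trivial Galois action and is trivially semisimple. Your decomposition $\NS(\bar X)/\ell\oplus T(\bar X)_\ell/\ell$ also works, but note a small slip: the $k'$ you invoke for Maschke must be a field over which $\Gal(k')$ acts trivially on $\NS(\bar X)$, which is not a priori the same as the Kuga--Satake field; you should either enlarge $k'$ or name the second extension separately.

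Second, the paper asserts that \cite[Thm.~4.17]{Pera} gives the integral Clifford identification for \emph{all} $\ell\ne p$, not merely a cofinite set, and then deduces (i) directly from Proposition \ref{semisimplek}. Your more cautious ``cofinite integrally, all $\ell$ rationally'' stance forces you to redo the rational argument via Theorem \ref{SScohom}(i) for (i); this is correct but slightly roundabout. Your closing paragraph flagging exactly this extraction from \cite{Pera} as the one non-formal input is apt.
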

\begin{proof}
Choose a polarisation 
$$\xi \in \Pic(X)\subset\NS(\bar X)^{\Gal(k)}\subset 
\H_{\et}^2(\bar{X},\Z_{\ell}(1))^{\Gal(k)}.$$
By the adjunction formula the degree of $\xi$ is even,
so that $(\xi^2)=e_{X,\ell}(\xi,\xi)=2d$,
where $d$ is a positive integer.

Let $\PH_{\et}^2(\bar{X},\Z_{\ell}(1))$ be the orthogonal complement to 
$\xi$ in $\H_{\et}^2(\bar{X},\Z_{\ell}(1))$ with respect to $e_{X,\ell}$.
Clearly, $\PH_{\et}^2(\bar{X},\Z_{\ell}(1))\simeq\Z_\ell^{21}$ 
is a Galois-stable saturated $\Z_{\ell}$-submodule of 
$\H_{\et}^2(\bar{X},\Z_{\ell}(1))$. We have
$$\Z_{\ell}\,\xi\, \cap\, \PH_{\et}^2(\bar{X},\Z_{\ell}(1))=0$$
and the direct sum $\Z_{\ell}\,\xi\, \oplus\, \PH_{\et}^2(\bar{X},\Z_{\ell}(1))$ is a subgroup of index $2d$ in $\H_{\et}^2(\bar{X},\Z_{\ell}(1))$. 
Furthermore, if $\ell$ does {\sl not} divide $2d$, then
$$
\H_{\et}^2(\bar{X},\Z_{\ell}(1))=\Z_{\ell}\,\xi\, \oplus\, \PH_{\et}^2(\bar{X},\Z_{\ell}(1))
\eqno{(2)}
$$
and the restriction
$$e_{X,\ell}:  \PH_{\et}^2(\bar{X},\Z_{\ell}(1))\times  
\PH_{\et}^2(\bar{X},\Z_{\ell}(1)) \to \Z_{\ell}$$
is a perfect Galois-invariant pairing. 

By a theorem of K. Madapusi Pera \cite[Thm. 4.17]{Pera} 
the family of $\ell$-adic representations
$E_\ell:=\PH_{\et}^2(\bar{X},\Z_{\ell}(1))$ preserving
the quadratic form $q_\ell(x)=e_{X,\ell}(x,x)$, where $\ell$
is a prime not equal to $p$, satisfies
the assumption of Proposition \ref{semisimplek}.
It follows that the $\Gal(k)$-module
$$\PH_{\et}^2(\bar{X},\Q_{\ell}(1))=\PH_{\et}^2(\bar{X},\Z_{\ell}(1))\otimes_{\Z_{\ell}}\Q_{\ell}$$
is semisimple for each $\ell\not=p$. This implies that 
the $\Gal(k)$-module
$$\H_{\et}^2(\bar{X},\Q_{\ell}(1)) =\Q_{\ell}\,\xi\, \oplus\, \PH_{\et}^2(\bar{X},\Q_{\ell}(1))$$
is semisimple, and hence so is its $\Gal(k)$-submodule
$T(\bar X)_{\ell}\otimes_{\Z_{\ell}}\Q_{\ell}$,
so (i) is proved. 

Suppose that $(\ell,2d)=1$.
Let $\bar\xi$ be the image of $\xi$ in 
$$\H_{\et}^2(\bar{X},\mu_{\ell}(1))=\H_{\et}^2(\bar{X},\Z_{\ell}(1))/\ell.$$
From (2) we obtain a direct sum decomposition of $\Gal(k)$-modules
$$\H_{\et}^2(\bar{X},\mu_{\ell}(1))=\F_{\ell}\bar\xi \oplus 
\PH_{\et}^2(\bar{X},\Z_{\ell}(1))/\ell,$$
where $\bar\xi$ is $\Gal(k)$-invariant.
By Proposition \ref{semisimplek} the $\Gal(k)$-module
$\PH_{\et}^2(\bar{X},\Z_{\ell}(1))/\ell$
is semisimple for all but finitely many primes $\ell \ne p$.
Hence $\H_{\et}^2(\bar{X},\mu_{\ell}(1))$ is semisimple too.
Since $T(\bar X)_{\ell}$ is a Galois-stable 
saturated $\Z_{\ell}$-submodule 
of $\PH_{\et}^2(\bar{X},\Z_{\ell}(1))$, the $\Gal(k)$-module
$T(\bar X)_{\ell}/\ell$ is isomorphic to a submodule of 
$\PH_{\et}^2(\bar{X},\Z_{\ell}(1))/\ell$. This proves (ii).

Part (iii) follows from Proposition \ref{add} applied to
the family of $\Gal(k)$-submodules 
$$U_\ell:=T(\bar X)_\ell\ \subset \ E_\ell=\PH_{\et}^2(\bar{X},\Z_{\ell}),$$
for all $\ell\not=p$.
Indeed, by the Tate conjecture for K3 surfaces over finitely
generated fields of odd characteristic, proved by Madapusi Pera
\cite[Thm. 1]{Pera} (see also \cite{Maulik} and \cite{Charles}), we have
$T(\bar X)_\ell^{\Gal(k')}=0$ for any finite
separable field extension $k'$ of $k$.
\end{proof}

\begin{proof}[Proof of Theorem \ref{tatefiniteK3}]
The first statement is already proved in Theorem \ref{finiteK3} (ii).
As recalled in the introduction, for all but finitely
many primes $\ell$ we have a direct sum decomposition
of $\Gal(k)$-modules
$$\H_{\et}^2(\bar{X},\Z_{\ell}(1))=(\NS(X)\otimes \Z_{\ell}) \oplus 
T(\bar X)_{\ell}.$$
For these primes $\ell$ the Galois module
$\H_{\et}^2(\bar{X},\mu_{\ell})$ is
isomorphic to the direct sum of $\NS(X)/\ell$ and $T(\bar X)_{\ell}/\ell$.
By Theorem \ref{finiteK3} (iii), for all but finitely many $\ell$
we have $(T(\bar X)_{\ell}/\ell)^{\Gal(k)}=0$. This implies
$$\H_{\et}^2(\bar{X},\mu_{\ell})^{\Gal(k)}=(\NS(X)/\ell)^{\Gal(k)}.$$
The cokernel of the natural map $\NS(\bar X)^{\Gal(k)}\to
(\NS(X)/\ell)^{\Gal(k)}$ is a subgroup of the Galois
cohomology group $\H^1(k,\NS(\bar X))$. This group is finite,
since $\NS(\bar X)$ is a free abelian group of finite rank.
Removing the primes that divide the order of $\H^1(k,\NS(\bar X))$
we see that the natural map 
$$\NS(X)=\NS(\bar X)^{\Gal(k)}\lra \H_{\et}^2(\bar{X},\mu_{\ell})^{\Gal(k)}$$
is surjective for all but finitely many primes $\ell$.
\end{proof}

\begin{proof}[Proof of Theorem \ref{BrauerF}]
The Hochschild--Serre spectral sequence
$$\H^p(k,\H^q(\bar X,\G_m))\Rightarrow \H^{p+q}(X,\G_m)$$
gives rise to the well known filtration 
$\Br_0(X)\subset \Br_1(X)\subset\Br(X)$,
where $\Br_0(X)$ is the image of the natural map
$\Br(k)\to\Br(X)$ and $\Br_1(X)$ is the kernel of 
the natural map $\Br(X)\to\Br(\bar X)$. The spectral
sequence shows that $\Br_1(X)/\Br_0(X)$ is contained
in the finite group $\H^1(k,\NS(\bar X))$,
and $\Br(X)/\Br_1(X)$ is contained in $\Br(\bar X)^{\Gal(k)}$.
Hence it is enough to prove that $\Br(\bar X)^{\Gal(k)}$ is finite.

For a prime $\ell\not=p$ the
Kummer exact sequence gives rise to the following
exact sequence of $\Gal(k)$-modules \cite[(5), p. 486]{SZ}:
\begin{eqnarray*}
0\to&(\NS(\bar X)/\ell)^{\Gal(k)}\to 
\H^2(\bar X,\mu_\ell)^{\Gal(k)}& 
\to \Br(\bar X)_\ell^{\Gal(k)}\to\\
&\H^1(k,\NS(\bar X)/\ell) \to 
\H^1(k,\H^2(\bar X,\mu_\ell)).&
\end{eqnarray*}
In the proof of Theorem \ref{tatefiniteK3} we have seen that
for all but finitely many primes $\ell$ the Galois module $\NS(\bar X)/\ell$
is a direct summand of $\H^2(\bar X,\mu_\ell)$, and the second
arrow in the displayed exact sequence is an isomorphism.
It follows that $\Br(\bar X)_\ell^{\Gal(k)}=0$ 
for all but finitely many $\ell$. 

To finish the proof we note that for any prime $\ell\not=p$ the $\ell$-primary
subgroup $\Br(\bar X)\{\ell\}^{\Gal(k)}$ is finite.
This follows from \cite[Prop. 2.5 (c)]{SZ} in view of the
semisimlicity of $\H^2(\bar X,\Q_\ell(1))$, proved in 
Theorem \ref{finiteK3} (i), and the Tate conjecture
$$\NS(X)\otimes \Q_{\ell} = \H_{\et}^2(\bar{X},\Q_{\ell}(1))^{\Gal(k)}$$
proved by Madapusi Pera \cite[Thm. 1]{Pera} for any field $k$ of odd
characteristic $p$ finitely generated over $\F_p$.
\end{proof}


\begin{thebibliography}{99}


\bibitem{Bourbaki} N. Bourbaki, Alg\'ebre, Chapitre 9, Hermann, Paris, 1959.

\bibitem{Charles} F. Charles, {\em The Tate conjecture for K3 surfaces 
over finite fields}, Invent. Math. 194 (2013), 119--145.

\bibitem{Chev} C. Chevalley, {\it Th\'eorie de groupes de Lie}, tome
III. Paris, Hermann 1954.

\bibitem{DeligneK3} P. Deligne, {\em La conjecture de Weil pour
les surfaces K3}, Invent. Math. 15 (1972), 206--226.

\bibitem{Faltings1} G. Faltings, {\em Endlichkeitss\"atze f\"ur
abelsche Variet\"aten \"uber Z\"ahlkorpern}, Invent. Math. 73 (1983) 349--366;
Erratum 75 (1984) 381.

\bibitem{Faltings2} G. Faltings, {\em Complements to Mordell},
Chapter VI in: G. Faltings, G. Wustholz et al., Rational points.
Aspects of Mathematics, E6. F. Vieweg \& Sohn, Braunschweig,
1984.

\bibitem{Gro} A. Grothendieck, {\em Le groupe de Brauer}. I, II, III. 
In: Dix expos\'es sur la Cohomologie des Sch\'emas. 
North Holland, 1968, 46--188.

\bibitem{Lang} S. Lang, Abelian varieties. Second edition
Springer-Verlag, New York, 1983.

\bibitem{Matsusaka} T. Matsusaka, {\em The criteria for algebraic 
equivalence and the torsion group}.
Amer. J. Math. 79 (1957), 53--66.

\bibitem{Maulik} D. Maulik, {\em Supersingular K3 surfaces for large primes}.
arXiv:1203.2889

\bibitem{Milne} J. Milne, \'Etale cohomology. 
Princeton University Press, Princeton, 1980.

\bibitem{MB} L. Moret-Bailly, Pinceaux de vari\'et\'es ab\'eliennes, Ast\'erisque,
129, 1985.

\bibitem{Mumford} D. Mumford, Abelian varieties, Second edition,
Oxford University Press, London, 1974.

\bibitem{Pera} K. Madapusi Pera, {\em The Tate conjecture for K3 
surfaces in odd characteristic}. Preprint, April 2013,
available at http://www.math.harvard.edu/~keerthi/papers/tate.pdf.
arXiv: 1301.6326 

\bibitem{SerreTate} J.-P. Serre and J. Tate, {\em Good reduction of 
abelian varieties}. Ann. Math. 88 (1968), 492--517.

\bibitem{Serre} J.-P. Serre, Abelian $\ell$-adic representations
and elliptic curves. Second edition. Addison-Wesley, New York, 1989.

\bibitem{SerreSS} J.-P. Serre, {\em Sur la semi-simplicit\'e  
de produit tensoriel de repr\'esentations de groupes}.
Invent. Math. 116 (1994), 513--530.

\bibitem{Silverberg} A. Silverberg, {\em Fields of definition 
for homomorphisms of abelian varieties}. J. Pure Applied Algebra
77 (1992), 253--262.

\bibitem{SZ} A.N. Skorobogatov and Yu.G.~Zarhin, {\em A finiteness 
theorem for Brauer groups of abelian varieties and K3 surfaces}.
J. Alg. Geometry  17 (2008), 481--502.

\bibitem{Tate0} J. Tate, {\em Algebraic cycles and poles of zeta functions}.
In: Arithmetical Algebraic Geometry, Harper and Row, New York, 1965, 93--110.

\bibitem{Tate} J. Tate, {\em Endomorphisms of Abelian varieties over finite
fields}, Invent. Math. 2 (1966), 134--144.

\bibitem{TateDix} J. Tate, {\em On the conjectures of Birch and 
Swinnerton--Dyer and a geometric analog}. In: 
Dix expos\'es sur la Cohomologie des Sch\'emas. 
North Holland, 1968, 189--214.

\bibitem{ZarhinIz} Yu.G.~Zarhin, {\em Endomorphisms of 
Abelian varieties over fields of finite
characteristic}, Izv. Akad. Nauk SSSR Ser. Matem. 39 (1975)
272--277; Math. USSR Izv. 9 (1975) 255--260.

\bibitem{ZarhinMZ1} Yu.G.~Zarhin, {\em Abelian varieties in characteristic}
$P$, Mat. Zametki 19 (1976), 393--400; Mathematical Notes
19 (1976), 240--244.

\bibitem{ZarhinMZ2} Yu.G.~Zarhin, {\em Endomorphisms of Abelian 
varieties and points of finite order
in characteristic} $P$, Mat. Zametki 21 (1977), 737--744;
Mathematical Notes 21 (1978), 415--419.

\bibitem{ZarhinInv} Yu.G.~Zarhin, {\em A finiteness theorem for 
unpolarized Abelian varieties over
number fields with prescribed places of bad reduction}, Invent. Math.
79 (1985), 309--321.

\bibitem{ZarhinP} Yu.G.~Zarhin and A.N.~Parshin, {\em Finiteness 
problems in Diophantine
geometry}.  Amer. Math. Soc. Transl. (2) 143 (1989), 35--102.

\bibitem{ZarhinG} Yu.G.~Zarhin, {\em Homomorphisms of abelian varieties 
over finite fields}. In:
Higher-dimensional geometry over finite fields, D. Kaledin, Yu.
Tschinkel, eds., IOS Press, Amsterdam, 2008, 315--343.

\bibitem{ZarhinCEJM14} Yu.G.~Zarhin, {\em Abelian varieties over 
fields of finite characteristic}. Central Eur. J. Math. 12 (2014), 
659--674.
\end{thebibliography}
\end{document}